\theoremstyle{plain}
\newtheorem{theorem}{Theorem}[section]
\newtheorem{corollary}[theorem]{Corollary}
\newtheorem{lemma}[theorem]{Lemma}
\newtheorem{proposition}[theorem]{Proposition}
\theoremstyle{definition}
\newtheorem{example}[theorem]{Example}
\newtheorem{remark}[theorem]{Remark}
\newcommand\var{\operatorname{\mathbf{Var}}}
\newcommand{\indicator}{\mathbf{1}}
\newcommand{\R}{\mathbb{R}}
\newcommand{\N}{\mathbb{N}}
\newcommand{\M}{\mathcal{M}}
\newcommand{\X}{X}
\newcommand{\Y}{Y}
\renewcommand\P{{\operatorname{\mathbf{P}}}}   
\newcommand\E{{\operatorname{\mathbf{E}}}}
\newcommand{\D}{\mathbb{D}}
\begin{document}
\title{On the finiteness of the moments of the measure\\ of level sets of random fields}
\author{
  D. Armentano$^*$
\qquad
J-M. Aza\"is$^*$
\qquad
F. Dalmao$^*$
\qquad
J. R. Le\'{o}n$^*$
\qquad
E. Mordecki\thanks{Armentano, Dalmao, Le\'on and Mordecki at Universidad de la Rep\'{u}blica, Uruguay. 
e-mails: diego@cmat.edu.uy, fdalmao@unorte.edu.uy, rlramos@fing.edu.uy, mordecki@cmat.edu.uy. 
Aza\"is at  IMT, Universit\'e de Toulouse, France and Universidad de la Rep\'{u}blica, Uruguay . 
e-mail: jean-marc.azais@math.univ-toulouse.fr. 
Le\'on at Universidad Central de Venezuela, Venezuela.
}
}

\maketitle

\begin{abstract}
General conditions on smooth real valued random fields are given to ensure the finiteness of the moments 
of the measure of their level sets.  
As a by product, a new generalized Kac-Rice formula (KRF) for the expectation of the measure of these level sets 
in the one-dimensional case is obtained 
when the second moment can be uniformly bounded. 
The conditions involve 
(i) the differentiability of the trajectories up to a certain order $k$, 
(ii) the finiteness of the moments of the $k$-th partial derivatives of the field up to another order and
(iii) the boundedness of the field's joint density and some of its derivatives. 
Particular attention is given to the shot noise processes and fields.
Other applications include stationary Gaussian processes, Chi-square processes and regularized diffusion processes. 
\end{abstract}


\emph{AMS2000 Classifications:} Primary 60G60 and Secondary 60G15.

\emph{Key words:} Moments of measure of level sets, Kac-Rice formula, Crofton formula, shot noise process.

\section{Introduction}
Level sets of random fields play 
a key role 
in several branches of mathematics such as random algebraic geometry, probability, and mathematical physics. 
The focus is on different geometric characteristics of the level sets,  typically their geometric measure. 
Depending on the dimensions, these characteristics can be the number of crossings of a stochastic process, the length of level curves of a random field, etc.
Since the actual distribution of these random variables 
is usually out of reach, it is natural to study their moments 
and asymptotic distributions. 
 
The present work  concerns with the following two issues: 
(i) assessing the finiteness of the moments and 
(ii) their computation or estimation. 
For (ii) the main tool is the Kac-Rice formula (KRF for short) 
which gives the expected value and the higher order moments of
the measure of level sets of smooth random fields. 
These two issues are highly connected, for instance:

\begin{itemize}
\item[-]  The KRF of order one is, in general, valid under conditions that imply (without 
further hypotheses) that the expectation is finite. 
Furthermore, in some cases, necessary and sufficient conditions for the finiteness of the expectation of the measure of
level sets can be obtained from the KRF (see \cite{AAGL} and the references therein).

\item[-] For stationary Gaussian random fields, the KRF of order  two can be used to obtain the finiteness of the second moment of the measure of
level sets as in the works  \cite{geman}, \cite {kl}  and \cite{zazachichi}. Some complicated study  has been performed for higher moments in \cite{belyaev}. In other cases, the calculations are intractable.

\item[-] In the other direction, explained in section \ref{section:jma}, the  finiteness of the second moment is a tool to establish the 
validity of the KRF.

\end{itemize}
%
The seminal works of this field are that of  Kac \cite{kac} and Rice \cite{rice}. 
The KRFs were first established for Gaussian stochastic processes  
profiting from the fact that the Gaussian framework 
allows not only to obtain conditions under which 
the formulas are valid but also permits some explicit computations.
Adler in \cite{adler1} obtained KRF for Gaussian fields. 
Recent works of \cite{Poly} and \cite{AAGL} are also worth mentioning, as
they show the finiteness of the moments of the measure of nodal sets of a real-valued Gaussian stationary field. 
For a panoramic and contemporary view  of  these matters, 
we refer to the books \cite{adler}, \cite{aw} and \cite{BerzinEtAl}. 

Mainly motivated by the applications, there has been an interest in
studying such formulas for non-Gaussian processes. The first successful
attempt was that of Marcus \cite{marcus} who provided a formula for the
expected number of crossings of a process whose trajectories are
absolutely continuous. 
Concerning finiteness of moments Nualart \& Wschebor  \cite{nw}, 
by using properties of the trajectories of regular processes, 
show that the expectation of the number of crossings and its moments of order greater than one can be bounded. 
This result is based on the idea that a ``nice function" cannot have too many zeros.  
Unfortunately, the proof heavily relies on the intermediate value theorem,
and for this reason, it applies only to stochastic processes. 
Wschebor in \cite{w-ln} establishes the KRF for the measure of the level sets of fields
(not necessarily Gaussian).
However, it is important to point out that the hypotheses of these formulas are difficult to check. 
An important exception is when the field is a nonlinear transformation of a Gaussian one, 
as is the case of $\chi^2$, $t$, or $F$ random fields. 
The books \cite{adler} and \cite{aw} contain a comprehensive update of these subjects.   
The papers \cite{bd0}, \cite{borokov} and \cite{dalmo} consider crossings for discontinuous processes, and
the two last works include KRFs. Moreover, Bierm\'e \& Desolneux (in \cite{bd} and \cite{bd2}) studied crossings problems and KRF for shot noise processes. 

Within the applications of the KRF we mention the random sea surface modeling, and
the articles \cite{mary}, \cite{podry}, \cite{baxpory} and \cite{podrywa} contain Gaussian and general KRFs.
Worsley computes the expectations of some level sets characteristic 
in the context of medical image processing  \cite{Worsley} and in astrophysics \cite{Worsley2}. 
Other applications and KRFs for fields can be found in the recent monograph \cite{BerzinEtAl}.

The present paper considers first finiteness of  moments which remains an open problem, excepting the stationary Gaussian case limited to the first two moments. 

It can be a first step to establish a speed of convergence in the ``ergodic'' case when we observe the random field over an increasing set of parameter.

In the particular case of $\eta$-dependent random processes (a random field $X$ is $\eta$-dependent if $X(t),X(s)$ are independent whenever 
$|t-s|>\eta$), the finiteness of the second moment  gives directly a central limit theorem.

This finiteness gives also a central limit theorem in the case of increasing number of independent observations of the random field.

%

The main result in dimension one is given in section \ref{section:main}. 
Section \ref{section:examples} considers the application of the previous result to different examples with new results, even in the case of Gaussian processes, or $\chi^2$-processes.
Section \ref{s:sn1} contains the study of shot noise processes, this theme constitutes, together  with shot noise random fields, the main application of our results. 
In section \ref{section:co-one}, the previous result is extended to level sets of co-dimension one
that corresponds to random fields from $\R^d$ and the sphere $S^d$ to $\R$. 
Section \ref{snp} deals with shot noise random fields. 
Finally,  non-Gaussian KRFs are in section \ref{section:jma}. 
All results  presented in the examples are new  but, except for the toy Example 1, impossible  it is to know if they are optimal or not.

\section{Processes from $\R$ to $\R$}\label{section:main}

The basic idea of this section is the following (see the details below): \\ 
Let $f(\cdot)$ be defined as, say, $[0,1]$. 
Assume that the $k$-th derivative of $f$ is bounded by $M$ and 
that $f$ has $k$ zeros on $[0,1]$. 
Then, $f$ satisfies 
\[
|f(1/2)| \leq  M \frac {(1/2) ^k}{k !}.
\]

Now, if we replace the function $f$ by the paths of a random process $X$ that admits a density at $t=1/2$ 
and we assume that this density is bounded by $C$. 
Then, 
the probability of the event $\{|X(1/2)| \leq  M \frac {(1/2) ^k}{k !}\}$ 
is  smaller than
\[
2C M \frac {(1/2) ^k}{k !}.
\]

Theorem \ref{theorem:key} is just a systematic exploitation  of this method  with some generalization  because  we consider the  joint density of 
$X(t)$ and some derivatives. \bigskip

Assume that $\X=\{X(t)\colon t\in\R\}$ is a real valued stochastic process
with smooth paths. 
Define the number of crossings through level $u$ by the process $\X$ over the finite time interval $I\subset\R$ 
by
\begin{equation*}
N_u=N_u(I):=\#\{t\in I\colon X(t)=u\}.
\end{equation*}
Let  $|I|$ denote the length of $I$ and let  $\bar{I}$ be the middle point of $I$. 

As usual, we interpret the zeroth derivative $X^{(0)}$ as $X$ itself.
Thus, we have the following result.

\begin{theorem}\label{theorem:key}
Consider  $u$,  $\X$, $I$ and $N_u$ as above. 
Assume that $\X$ satisfies 
\begin{itemize}
\item[\rm(H1)] The sample paths of $\X$ are $C^k(I)$ for some  $k\geq 2$.
\item[\rm(H2)] For some  $m=1,2,\dots$, there exists a constant $D_m$ such that
$$
\E\left({|}X^{(k)}{|}_{\infty}^m\right)\leq D_m.
$$
\item[\rm(H3)] There exist $0\leq h\leq k$ and  a constant $C>0$ such that
the joint density of $X(t),X'(t),\dots,X^{(h)}(t)$ is bounded by $C$ uniformly in $t\in I$, 
and on  a neighborhood of $(u,0,\dots,0)$.
\end{itemize}
Then, for $p=1,2,\dots$, such that 
\begin{equation}\label{eq:nw}
p<\left(k-\frac{h}2-{1\over1+h}\right)\left(\frac1m+\frac1{1+h}\right)^{-1},
\end{equation}
the $p$-th moment of the number of  crossings $N_u$ is finite.   
 If in addition we assume \\
{\rm (H4)} the density in {\rm(H3)} is bounded by $C$ uniformly 
 in $t\in I$ and $(u_1,\ldots, u_{h+1 }) \in \R^{h+1}$. \\
 Then
$$
\E(N_u^p)\leq  (k-1)^p+D_m\cdot E_{\alpha,k,p}+C|I|^{(h+1)(k-h/2)}\cdot D_{\alpha,k,h,p},
$$
where $\alpha$ is any real number such that 
$\frac{p}m<\alpha<k-\frac{h}2-{1+p\over1+h}$, 
\[
{\color{black}
E_{\alpha,k,p}=p(k-1)^{p-1}\sum^\infty_{a=1}\frac{(a+1)^{p-1}}{a^{m\alpha}}  }
\quad\textrm{ and }\quad
D_{\alpha,k,h,p}=
\frac{2^{(h+1)(1+h/2-k)}}{k!\cdot(k-h)!}\sum^\infty_{a=1}(a+1)^{p-1}a^{1-(h+1)(k-h/2-\alpha)}.
\]

\end{theorem}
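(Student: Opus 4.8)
The plan is to combine a deterministic interpolation estimate with a dyadic (binary) subdivision of $I$ and a single scalar threshold on the sup-norm of $X^{(k)}$. The whole argument is a quantitative version of the ``nice functions have few zeros'' heuristic described before the statement.

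First I would establish the deterministic backbone. If $X-u$ has at least $k$ zeros in a subinterval $J\subset I$, then by Rolle's theorem $X^{(i)}$ has at least $k-i$ zeros in $J$ for every $0\le i\le h$; expressing $X^{(i)}$ through its Lagrange interpolation at those zeros and using the Cauchy remainder gives, at the midpoint $\bar J$,
$$
\bigl|X^{(i)}(\bar J)-u\,\indicator\{i=0\}\bigr|\le \frac{|X^{(k)}|_\infty}{(k-i)!}\Bigl(\tfrac{|J|}{2}\Bigr)^{k-i},\qquad 0\le i\le h.
$$
Hence, on $\{|X^{(k)}|_\infty\le M\}$, the vector $(X(\bar J),\dots,X^{(h)}(\bar J))$ lies in an explicit box centred at $(u,0,\dots,0)$ of volume $2^{(h+1)(1+h/2-k)}M^{h+1}|J|^{(h+1)(k-h/2)}/\prod_{i=0}^{h}(k-i)!$, the key exponent being $(h+1)(k-h/2)=\sum_{i=0}^{h}(k-i)$, exactly the one in the statement. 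Bounding $\prod_{i=0}^{h}(k-i)!\ge k!\,(k-h)!$ and invoking (H4) then yields
$$
\P\bigl(J \text{ carries }\ge k\text{ crossings},\ |X^{(k)}|_\infty\le M\bigr)\le C\,\frac{2^{(h+1)(1+h/2-k)}}{k!\,(k-h)!}\,M^{h+1}|J|^{(h+1)(k-h/2)}.
$$

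Next I would set up the combinatorial reduction. Subdivide $I$ dyadically and call an interval \emph{heavy} if it carries at least $k$ crossings; since heaviness is inherited by parents, the heavy intervals form a finite subtree $T$ (finiteness of $T$ being a consequence of $N_u<\infty$ a.s., which follows from (H1) with $k\ge2$). The light children of the nodes of $T$ partition $I$, and each carries at most $k-1$ crossings, giving the clean bound $N_u\le(k-1)(1+H)$ with $H:=|T|$ the number of heavy intervals. This is what isolates the term $(k-1)^p$ and reduces everything to the tail of $H$. A depth count on the binary tree shows that $H\ge a$ forces the existence of a heavy interval of length $\lesssim|I|/a$; I would then choose the threshold $M=M(a):=a^{\alpha}$ and split $\{H\ge a\}$ according to whether $|X^{(k)}|_\infty$ exceeds $M(a)$. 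On $\{|X^{(k)}|_\infty>M(a)\}$, (H2) and Markov give a contribution of order $D_m\,a^{-m\alpha}$, with no scale factor since the sup-norm is global; on the complement, the box estimate above, union-bounded over the dyadic intervals of length $\lesssim|I|/a$ (a geometric sum dominated by its top scale), produces a contribution of order $C\,|I|^{(h+1)(k-h/2)}a^{1-(h+1)(k-h/2-\alpha)}$. Feeding these into the layer-cake identity for $\E\bigl((1+H)^p\bigr)$ and reinstating the factor $k-1$ from $N_u\le(k-1)(1+H)$ reconstructs precisely the two series $E_{\alpha,k,p}$ and $D_{\alpha,k,h,p}$.

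The payoff is that $E_{\alpha,k,p}$ converges iff $\alpha>p/m$ and $D_{\alpha,k,h,p}$ converges iff $\alpha<k-\tfrac h2-\tfrac{1+p}{1+h}$, and the existence of an $\alpha$ meeting both constraints is algebraically equivalent to the admissibility condition \eqref{eq:nw}; this is what makes the moment finite and the range sharp. I expect the main obstacle to be exactly this combinatorial-probabilistic bookkeeping: making the passage from ``many crossings'' to ``a short heavy interval'' quantitative and uniform in $t\in I$, and coupling the count $a$, the scale $|I|/a$ and the threshold $a^{\alpha}$ so that every exponent (and the constants $2^{(h+1)(1+h/2-k)}$, $k!\,(k-h)!$, $p(k-1)^{p-1}$) lands on the nose without double counting across scales. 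The deterministic interpolation step and the a.s. finiteness of $N_u$ are routine by comparison; the delicate point is organising the dyadic sum so that the balance parameter $\alpha$ appears cleanly and the convergence region coincides with \eqref{eq:nw}.
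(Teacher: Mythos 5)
Your analytic skeleton is the paper's: the Lagrange--Rolle interpolation bound at the midpoint (Lemma \ref{lemma:lagrange}), the resulting box of volume $2^{(h+1)(1+h/2-k)}M^{h+1}|J|^{(h+1)(k-h/2)}\prod_{i=0}^h\frac{1}{(k-i)!}$ controlled by the density bound, the threshold $M=a^{\alpha}$ with Markov/Chebyshev and (H2) on the complementary event, the layer-cake identity, and the balancing of $\alpha$ in the window $(\frac{p}{m},\,k-\frac h2-\frac{1+p}{1+h})$, whose non-emptiness is \eqref{eq:nw}. Where you genuinely diverge is the combinatorial reduction from ``$N_u\geq\ell$'' to ``some short interval carries $k$ crossings'': the paper (Lemma \ref{lemma:3}) writes $\ell=a(k-1)+r$, cuts $I$ into $a$ \emph{equal} pieces, and pigeonholes, which gives a union bound over exactly $a$ intervals of length $|I|/a$ and hence the stated constants directly. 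Your dyadic tree of heavy intervals realizes the same count/scale trade-off but is heavier machinery; since the relevant depth is of order $\log_2 a$ rather than $a$ itself, your constants can only match $E_{\alpha,k,p}$ and $D_{\alpha,k,h,p}$ up to powers of $2$ (your own hedge about exponents ``landing on the nose'' is warranted), although the exponents, the convergence region and condition \eqref{eq:nw} come out identically.

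Two points in your reduction need repair. First, the assertion that $N_u<\infty$ a.s.\ follows from (H1) is false: a $C^k$ path can have infinitely many $u$-crossings, and a.s.\ finiteness of $N_u$ is a \emph{conclusion} of the theorem, not an a priori fact (the paper never assumes it; it bounds $\P(N_u\geq\ell)$ for each finite $\ell$). Consequently the finiteness of your heavy tree $T$, which you need for ``the light children of $T$ partition $I$'', is not justified as written. The fix is cheap: when $T$ is infinite the bound $N_u\leq(k-1)(1+H)$ is vacuously true, and the inclusion $\{H\geq a\}\subset\{\text{some dyadic interval at depth about }\log_2 a\text{ is heavy}\}$ holds whether or not $H$ is finite, so you should run the argument through these event inclusions rather than through an almost-sure partition. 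Second, the union bound over \emph{all} dyadic intervals of length at most $|I|/a$ is both unnecessary (heaviness passes to ancestors, so a single scale suffices, as in the paper) and, if you insist on it, requires the explicit remark that $(h+1)(k-h/2)>1$ so that the sum over depths is indeed dominated by its top scale.
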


\begin{remark}[Large $m$]
The limit case, when $m$ can be chosen arbitrarily large (as is the case for Gaussian processes), 
corresponds to $\alpha$ close to zero.
The limit condition \eqref{eq:nw} then becomes
$$
p<\left(k-\frac{h}2\right)(h+1)-1.
$$
\end{remark}

\begin{remark}[Large $k$]
When the process $X$ has $C^{\infty}(I)$ paths, 
 if {\rm(H2)} holds true for $m=1$ with arbitrary $k$, 
 and {\rm(H3)} holds true for some $h=0,1,\dots$,
then, all the moments of $N_u$ are finite.
\end{remark}

\begin{remark}[$h=0$]
The case $h=0$ and $m=1$ corresponds to (3.23) in pg. 82 in \cite{aw}, obtained in Lemma 1.2 in \cite{nw}.
When $h=0$ and $m>1$ Theorem \ref{theorem:key} assures that the $p$th moment is finite for
$$
p<{m\over m+1}(k-1).
$$
\end{remark}

Before proving the theorem we establish three  preliminary lemmas. 
{\color{black}Recall that $f^{(0)}=f$.}
\begin{lemma}\label{lemma:lagrange}
Consider a function $f\colon I\to\R$ of class $C^k(I)$ for an interval $I$ 
{\color{black}and $k\geq 1$}. 
If $f-u$ has $k$ roots in $I$ and {\color{black}$0\leq$}$h\leq k$, we have
\[
|f(\bar{I})-u|\leq {{|}f^{(k)}{|}_{\infty}\over k!}\left({|I|\over 2}\right)^k,\
|f'(\bar{I})|\leq {{|}f^{(k)}{|}_{\infty}\over (k-1)!}\left({|I|\over 2}\right)^{k-1},\
\dots,\
|f^{(h)}(\bar{I})|\leq {{|}f^{(k)}{|}_{\infty}\over (k-h)!}\left({|I|\over 2}\right)^{k-h}.
\]
\end{lemma}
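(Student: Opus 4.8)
The plan is to treat this as a purely deterministic interpolation-error estimate, proving each of the $h+1$ inequalities by a single application of the Lagrange remainder formula to an appropriate derivative of $f$. First I would pass to the shifted function $g := f - u$, which by hypothesis has $k$ roots in $I$; note that $g^{(j)} = f^{(j)}$ for $j \geq 1$ and in particular $g^{(k)} = f^{(k)}$. The backbone is iterated Rolle's theorem: if $g$ has $k$ zeros in $I$, then $g' = f'$ has at least $k-1$ zeros, $g'' = f''$ has at least $k-2$, and inductively $f^{(j)}$ has at least $k-j$ zeros in $I$ for every $0 \le j \le k$. In particular, for each fixed $h$ with $0 \le h \le k$, the function $f^{(h)}$ is of class $C^{k-h}$ and possesses $k-h$ zeros $s_1,\dots,s_{k-h}\in I$.

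Next, for each such $h$ I would apply the Lagrange interpolation remainder to $f^{(h)}$ at its $k-h$ zeros. Since the unique interpolating polynomial of degree $<k-h$ through those zeros is identically zero, and the $(k-h)$-th derivative of $f^{(h)}$ is $f^{(k)}$, the remainder formula yields, for any $t\in I$,
\[
f^{(h)}(t)=\frac{f^{(k)}(\xi)}{(k-h)!}\prod_{i=1}^{k-h}(t-s_i)
\]
for some $\xi\in I$. Evaluating at the midpoint $t=\bar I$ and bounding each factor by $|\bar I - s_i|\le |I|/2$ (because every $s_i$ lies in $I$ and $\bar I$ is its midpoint) gives
\[
|f^{(h)}(\bar I)|\le \frac{|f^{(k)}|_\infty}{(k-h)!}\left(\frac{|I|}{2}\right)^{k-h}.
\]
Taking $h=0$ on $g=f-u$ produces the first inequality (carrying the $-u$), and letting $h$ range over $0,1,\dots$ produces the entire chain of bounds stated in the lemma.

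The main obstacle I anticipate is purely technical: handling possible multiplicities among the roots, where the elementary form of Rolle's theorem (for distinct zeros) and of the Lagrange remainder (for distinct nodes) do not apply verbatim. I would address this by invoking the generalized Rolle's theorem, counting zeros with multiplicity so that a root of $g$ of multiplicity $r$ already contributes zeros to $g',\dots,g^{(r-1)}$, together with the confluent (Hermite) form of the interpolation remainder, which retains the same prefactor $f^{(k)}(\xi)/(k-h)!$ and the same product of node differences even when the nodes coalesce. With this caveat accounted for, all of the estimates are uniform in the configuration of the roots, and the argument closes.
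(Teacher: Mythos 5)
Your proposal is correct and follows essentially the same route as the paper: the paper proves the first bound by applying the Lagrange interpolation remainder with the $k$ roots of $f-u$ as nodes (so the interpolant is identically zero) and states that the remaining bounds ``follow in a similar way,'' which is precisely your iterated-Rolle-plus-remainder argument applied to $f^{(h)}$ at its $k-h$ zeros. Your extra care about coalescing nodes via Hermite interpolation is a harmless refinement (in the paper's application the $k$ roots are distinct crossing points, so it is not needed).
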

\begin{proof}
The proof is based on the Lagrange remainder form for polynomial interpolation. 
That is, let $g:I\to\R$ be $C^k$, $t_1,\dots,t_k\in I$ and $P$ be the only 
polynomial of degree $k-1$ 
such that $g(t_j)=P(t_j),j=1,\dots,k$. Then, for $t\in I$ we have
$$
g(t)-P(t)=\frac{1}{k!}\prod^k_{j=1}(t-t_j)\cdot g^{(k)}(\xi),
$$
for some $\xi$ such that $\min\{t_1,\dots,t_k,t\}<\xi<\max\{t_1,\dots,t_k,t\}$, 
see Lemma 5.2, p. 135 in \cite{aw}. 

Now, for $g=f-u$, $t=\bar{I}$ and using as $t_1,\dots,t_k$ 
the roots of $f-u$, we get  {\color{black} $P=0$ }and so the first inequality follows. 
The proofs of the other inequalities follow in a similar way.
\end{proof}
\begin{lemma}\label{lemma:fubini}
Let $Z$ be a random variable taking non negative integer values. Then,
\begin{equation*}
\E(Z^p)=\sum_{\ell=1}^\infty\left(\ell^p-(\ell-1)^p\right)\P(Z\geq \ell)\leq
p\sum_{\ell=1}^\infty\ell^{p-1}\P(Z\geq \ell).
\end{equation*}
\end{lemma}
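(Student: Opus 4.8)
The plan is to derive the identity from a deterministic telescoping sum and then obtain the inequality by a pointwise bound on the summands. First I would record, for every integer $n\geq 0$, the elementary telescoping identity
\[
n^p=\sum_{\ell=1}^{n}\left(\ell^p-(\ell-1)^p\right)=\sum_{\ell=1}^{\infty}\left(\ell^p-(\ell-1)^p\right)\indicator_{\{n\geq\ell\}},
\]
where the convention $0^p=0$ makes the case $n=0$ (an empty sum) correct. Applying this with $n=Z(\omega)$ pathwise yields the representation $Z^p=\sum_{\ell\geq 1}\left(\ell^p-(\ell-1)^p\right)\indicator_{\{Z\geq\ell\}}$.

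Next I would take expectations. Since $p\geq 1$, the map $x\mapsto x^p$ is nondecreasing on $[0,\infty)$, so every coefficient $\ell^p-(\ell-1)^p$ is nonnegative, and the indicators are nonnegative as well. Tonelli's theorem therefore permits exchanging the sum and the expectation term by term, and using $\E\left(\indicator_{\{Z\geq\ell\}}\right)=\P(Z\geq\ell)$ gives the claimed equality
\[
\E(Z^p)=\sum_{\ell=1}^{\infty}\left(\ell^p-(\ell-1)^p\right)\P(Z\geq\ell).
\]

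For the inequality I would bound each coefficient. Writing $\ell^p-(\ell-1)^p=\int_{\ell-1}^{\ell}p\,t^{p-1}\,dt$ and noting that $t\mapsto t^{p-1}$ is nondecreasing on $[\ell-1,\ell]$ for $p\geq 1$, the integrand is at most $p\,\ell^{p-1}$, whence $\ell^p-(\ell-1)^p\leq p\,\ell^{p-1}$ (equivalently, this is the mean value theorem combined with monotonicity of the derivative). Substituting this bound into the equality produces the stated upper estimate. There is no genuine obstacle here; the only two points deserving a word of care are the empty-sum convention at $Z=0$ and the justification of the interchange of summation and expectation, both of which are settled by the nonnegativity of the summands through Tonelli's theorem.
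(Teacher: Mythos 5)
Your proof is correct and follows essentially the same route as the paper's: the paper's one-line proof invokes Fubini's theorem (your Tonelli interchange after the telescoping decomposition) and the convexity of $x\mapsto x^p$ (your bound $\ell^p-(\ell-1)^p=\int_{\ell-1}^{\ell}p\,t^{p-1}\,dt\leq p\,\ell^{p-1}$ via the nondecreasing derivative is exactly that convexity argument). Your version merely spells out the details the paper leaves implicit, including the empty-sum convention at $Z=0$.
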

\begin{proof} 
Use Fubini's Theorem and the convexity of the function $x\mapsto x^p$.
\end{proof}
\begin{lemma}\label{lemma:3}
Assume that conditions {\color{black}{\rm (H1), (H2), (H3)} and {\rm(H4)}} of Theorem \ref{theorem:key} hold true.
Let $\ell\geq k$ and define $a$ and $r$ such that $\ell=a(k-1)+r$ with $1\leq r\leq k-1$.
Then, for $B>0$, we have:
\begin{equation*}
\P(N_u\geq \ell,{|}X^{(k)}{|}_\infty\leq B)
\leq aC(2B)^{h+1}\prod_{i=0}^h\left({|I|\over 2a}\right)^{k-i}\frac1{(k-i)!}.
\end{equation*}
where $C$ is the bound in {\rm(H3)}.

  {\color{black} When {\rm(H4)} does not  hold, the inequality is true for $ \ell$ sufficiently large only.}
\end{lemma}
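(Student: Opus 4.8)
The plan is to exploit Lemma~\ref{lemma:lagrange} through a pigeonhole argument on a partition of $I$. First I would note that $N_u$ simply counts the roots of $\X-u$ in $I$, and split $I$ into $a$ consecutive subintervals $I_1,\dots,I_a$ of equal length $|I|/a$. Since $\ell=a(k-1)+r$ with $r\geq 1$, on the event $\{N_u\geq\ell\}$ the number of roots is at least $a(k-1)+1$; if every subinterval contained at most $k-1$ roots the total would be at most $a(k-1)$, a contradiction. Hence at least one subinterval, say $I_j$, contains at least $k$ roots of $\X-u$ (boundary roots can be assigned to a single subinterval, a measure-zero convention that does not affect the count).

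On that subinterval I would apply Lemma~\ref{lemma:lagrange} to $f=\X$ restricted to $I_j$ (so that $|I_j|=|I|/a$ and $\bar{I}_j$ is its midpoint), together with the constraint $|X^{(k)}|_\infty\leq B$. This produces the simultaneous bounds
\[
|X(\bar{I}_j)-u|\leq\frac{B}{k!}\Big(\frac{|I|}{2a}\Big)^k,\qquad
|X^{(i)}(\bar{I}_j)|\leq\frac{B}{(k-i)!}\Big(\frac{|I|}{2a}\Big)^{k-i}\quad(i=1,\dots,h).
\]
Consequently the event $\{N_u\geq\ell,\,|X^{(k)}|_\infty\leq B\}$ is contained in $\bigcup_{j=1}^a E_j$, where $E_j$ is the event that the vector $(X(\bar{I}_j),X'(\bar{I}_j),\dots,X^{(h)}(\bar{I}_j))$ lies in the box centered at $(u,0,\dots,0)$ whose half-widths are the right-hand sides above.

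Next I would apply the union bound and estimate each $\P(E_j)$ by integrating the joint density of $(X(\bar{I}_j),\dots,X^{(h)}(\bar{I}_j))$ over that box. Under {\rm (H4)} this density is bounded by $C$ everywhere, so $\P(E_j)$ is at most $C$ times the volume of the box, which factorizes as
\[
C\,(2B)^{h+1}\prod_{i=0}^h\frac{1}{(k-i)!}\Big(\frac{|I|}{2a}\Big)^{k-i}.
\]
Summing the $a$ identical bounds yields exactly the claimed estimate. For the final assertion, when only {\rm (H3)} holds the density is controlled merely on a neighborhood of $(u,0,\dots,0)$; the box above fits inside that neighborhood only once its half-widths are small enough, i.e.\ once $a$ (equivalently $\ell$) is large, which explains the restriction to sufficiently large $\ell$.

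The genuinely non-routine step I expect to be the crux is the partition-and-pigeonhole device that forces a single short subinterval to carry $k$ roots, so that the Lagrange estimates of Lemma~\ref{lemma:lagrange} apply and yield a product box whose volume factorizes as stated; the union bound and the density integration are then mechanical.
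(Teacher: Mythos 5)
Your proposal is correct and follows essentially the same route as the paper: partition $I$ into $a$ equal subintervals, use pigeonhole to force one subinterval to contain $k$ roots, apply the Lagrange interpolation bounds of Lemma~\ref{lemma:lagrange} at the midpoint, and finish with a union bound and the density bound (H4), with the same observation that (H3) alone suffices only for large $\ell$. No gaps.
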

\begin{proof}
We divide $I$ into $a$ equal subintervals   {\color{black}$I_1, \ldots  I_a$}, s.t. at least one of them has $k$ zeros.
After this, we use an union bound: 
\begin{align*}
\P(N_u\geq\ell,{|}X^{(k)}{|}_\infty\leq B)&=
\P(N_u\geq a(k-1)+r,{|}X^{(k)}{|}_\infty\leq B)
\leq \P\left(\bigcup^{a}_{j=1}\{N_u(I_j)\geq
  k\},{|}X^{(k)}{|}_\infty\leq B\right)\\
&\leq
\sum^a_{j=1}\P\left(|X(\bar{I}_j)-u|\leq \frac{B}{k!}\left(\frac{|I|}{2a}\right)^k,
\bigcap^h_{i=1}|X^{(i)}(\bar{I}_j)|\leq \frac{B}{(k-i)!}\left(\frac{|I|}{2a}\right)^{k-i}\right)\\
&\leq aC\prod_{i=0}^h\frac{2B}{(k-i)!}\left(\frac{|I|}{2a}\right)^{k-i},
\end{align*}
obtaining the result.  {\color{black} Of course,  when $ h=0$, the intersection in  the equation above   must be absent.}
\end{proof}

\begin{proof}[Proof of Theorem \ref{theorem:key}]  {\color{black}Assume (H4) for the moment }
From Lemma \ref{lemma:fubini}, we have
\begin{equation}\label{eq:np}
\E(N_u^p)
\leq  (k-1)^p+\sum_{\ell=k}^\infty p\ell^{p-1}\P(N_u\geq \ell).
\end{equation}
In order to bound the summands in the r.h.s. of \eqref{eq:np}, for given $\ell$ consider $a$ and $r$ as in Lemma \ref{lemma:3},
and $\alpha>0$ to be defined later.
We use the following bound:
$$
\P(N_u\geq \ell)\leq\P(N_u\geq \ell,{|}X^{(k)}{|}_\infty\leq a^\alpha)+\P({|}X^{(k)}{|}_\infty> a^\alpha).
$$
Using now Lemma \ref{lemma:3} for the first summand and  {\color{black}Chebyshev's} inequality for the second,
we obtain 
\begin{multline*}
\E(N_u^p)\leq (k-1)^p+p(k-1)^{p-1}\sum_{a=1}^\infty(a+1)^{p-1}\P(N_u\geq \ell)\\
\leq (k-1)^p+p(k-1)^{p-1}\left(
C2^{h+1}\left(\frac{|I|}{2}\right)^{(h+1)(k-h/2)}\prod_{i=0}^h\frac{1}{(k-i)!}.
\sum_{a=1}^\infty(a+1)^{p-1}a^{(h+1)(\alpha+h/2-k)+1}\right.\\
\left.
+D_m\sum_{a=1}^\infty(a+1)^{p-1}a^{-m\alpha}\right).
\end{multline*}
The conditions for the simultaneous convergence of the two series above are
\begin{equation*}
\frac{p}m<\alpha<k-\frac{h}2-{1+p\over1+h}.
\end{equation*}
Under condition \eqref{eq:nw}, there always exists an adequate $\alpha$, concluding the proof of Theorem \ref{theorem:key}. 

 {\color{black}   When (H4) does not hold, the calculation above is true only for $\ell$ (or $a$) sufficiently large, and this does not change the 
condition of convergence of the series. }
\end{proof}

\section{Examples}\label{section:examples}
\begin{example}[{\bf Conditional sine-cosine process}]
We begin with a toy example. Consider a random variable $\omega$ such that $\E\ (|\omega|^M)<\infty$ and $\E\ (|\omega|^{M+1})=\infty$. 
This variable is 
the random frequency of a process 
$\X=\{X(t)\colon 0\leq t\leq 2\pi\}$ that we construct as
$$
X(t)=
\xi_1\sin\omega t+\xi_2\cos\omega t,
$$
where $\xi_1$ and $\xi_2$ are two standard normal independent random variables, also independent from $\omega$.
Conditionally to the value of $\omega$, the process $\X$ is a Gaussian sine-cosine process.
It is clear that the density of $X(t)$ is standard normal for each $t$.
Nevertheless, it should be observed that $X(t)$ is, in general, not a Gaussian process.
The number $N_0$ of roots of $X(t)$  {\color{black} on $[0,2\pi]$ }coincides with the number of roots of
the process
$$
\tilde{X}(t)=
\frac{\xi_1}{\sqrt{\xi_1^2+\xi_2^2}}\sin\omega
t+\frac{\xi_2}{\sqrt{\xi_1^2+\xi_2^2}}\cos\omega t=\cos(\omega t-\theta),
$$
where $\theta=\arctan(\xi_1/\xi_2)$ is defined a.s. 
Therefore, $N_0$ satisfies  
$|N_0-2\omega|\leq 2$.
This means that for the considered process, the maximal finite moment of $N_0$ is $M$.
To apply Theorem \ref{theorem:key}, we compute 
$$
{|}X^{(k)}{|}_\infty\leq\omega^k\sqrt{\xi_1^2+\xi_2^2}.
$$
Note that $\sqrt{\xi_1^2+\xi_2^2}$ follows a Rayleigh's distribution.
Then, based on the independence of the three random variables above, we have
$$
\E\left({|}X^{(k)}{|}_\infty^m\right)\leq H_m\E(\omega^{mk}),
$$
where $H_m$ is the finite moment of the Rayleigh distribution.
So, we can apply Theorem \ref{theorem:key} with $h=0$, $k=M$ and $m=1$. 
Assuring that the moments of order $p\leq (M-1)/2$ are finite. 
This result should be compared 
with the true result which is $M$.
\end{example}

\begin{example}[{\bf Stationary Gaussian process}] \label{ex:stat}
Consider a stationary Gaussian process with spectral measure 
supported in a set with an accumulation point. {\color{black} By Ex. 3.4 in \cite{aw} we know that for all $h$ 
$X(t), X'(t), \ldots , X^{(h)}(t)$ admit a joint density. 
By Borell-Sudakov-Tsirelson Theorem (see, for example Th. 2.8 in  \cite{aw}), for all $m$:
\[
\E\left({|}X^{(k)}{|}_{\infty}^m\right)\leq + \infty.
\]
It is then possible to take $h=k$ and $m$ arbitrarily large.} 
Thus, Formula \eqref{eq:nw}
gives 
a finite moment of order
\[
p {\color{black} \leq}{k(k+1)\over 2}-2. 
\]
To visualize it, see Table \ref{table:max}.
\begin{table}[H]
\begin{center}
\begin{tabular}{|c|c|c|c|}
\hline 
$k$ & $2$ & $3$ & $4$\\
\hline
$p$ & 1 &  2,3,4 & 5,6,7,8\\
\hline
\end{tabular}
\caption{If $\X$ is $C^k$, following Theorem \ref{theorem:key} we obtain that $\E(N_u^p)$ is finite for values of $k$ and $p$ above.}\label{table:max}
\end{center}
\end{table}
This result improves the previous one obtained in \cite{nw}
that is recovered in the case $h=0$ and $m=1$. 
\end{example}

\begin{example}[{\bf Chi-square process}]
Consider a Chi-square process
$\Y=\{Y(t)\colon t\in\R\}$ given by 
$$
Y(t)={|}X(t){|}^2=\sum_{i=1}^nX_i(t)^2,
$$
where $X=\{X(t)\colon t\in\R\}$ with $X(t)=\left(X_1(t),\dots,X_n(t)\right)$ is an $\R^n$ valued random process whose coordinates 
are $n$ independent copies of  a stationary Gaussian process with $C^k(\R)$ paths and variance 1.
First, observe that the case $n=1$ admits an ad-hoc treatment. If $n=1$
for $u=0$ we have $N^Y_0[0,T]=N^X_0[0,T]$, 
and  for $u>0$ we have $N^Y_u[0,T]=N^X_{\sqrt{u}}[0,T]+N^X_{-\sqrt{u}}[0,T]$.
For $n\geq 2$, the level $u=0$ is not interesting, as for this level the process is differentiable and
non-negative and has a bounded density, 
so Bulinskaya  Lemma (Prop. 1.20 in  \cite{aw}) gives that a.s. the number of crossings is zero.
Consequently, we consider $n\geq 2$ and $u>0$.
Observe that, excluding the uninteresting case where $\lambda_2=\var X'(t)=0$, 
the joint density of the random variables
$X(t),X'(t)$ is bounded, as they conform a pair of independent Gaussian random variables. 

It is not difficult to check the hypothesis of Theorem \ref{theorem:key} in two different situations: $n=2$ and $n\geq 3$. 
It is direct to see that $\Y$ has $C^k$ paths and that $ {|}\Y{|}_{\infty}$ has moments of every order
when considering a finite interval $I\subset\R$.
Observe now that $Y(0)$ has a $\chi^2(n)$ density that is bounded for $n\geq 2$.
We can then apply Theorem \ref{theorem:key} with the given $k$, arbitrary $m$, and $h=0$.
Based on \eqref{eq:nw}, we obtain the finiteness of the moments of the crossings, for $u>0$, of order
\[
p= k-2,
\] 
 {\color{black} which is relevant only in the case $k \geq 3$.}
A more refined analysis can be carried out.
Regarding the derivative, we have
\[
Y'(t) = 2\sum_{i=1}^nX_i(t)X'_i(t).
\]
We see that conditional to $X(t)$, the random variable $Y'(t)$ has a Gaussian distribution with variance $4\lambda_2 Y(t)$.
Its conditional density $p_{Y'|X=x}$  is bounded by
\[
(Const)  \big(Y(t)\big)^{-1/2},
\]
where $(Const)$ denotes a meaningless constant whose value may change from line 
to line. 

Let now $B_1$ and $B_2$ be two Borel sets of $\R$ with respective  measures $|B_1|$ and $|B_2|$. Writing $Y$ and $Y'$ for  $Y(t)$ and $Y'(t)$
\begin{align} \label{e:jma:chi}
\P(Y' \in B_1, Y \in B_2) 
& = \int_{\R^n} dx  \int_{B_1}   p_{Y'|X=x}(y)  \indicator_{\{Y \in B_2\}} p_X(x) dy  \notag \\
& \leq 
 (Const) \int_{\R^n}   |B_1| Y^{-1/2} \indicator_{\{Y \in B_2\}} p_X(x) dx \notag \\
& = (Const)  |B_1|   \int _{B_2}   y^{-1/2}  p_Y(y) dy = (Const)  |B_1|    \int _{B_2}  y^{n/2-3/2}  e^{-y/2}   dy,  
\end{align}
where $p_X(.)$ and $ p_Y(.)$ are the density functions of $X$ and $Y$, respectively. 
Note that we used the explicit expression of the density of a $\chi^2(n)$ 
distribution. 
When $n \geq 3$, the  integrand  in \eqref{e:jma:chi} is bounded yielding that 
$$
\P\left(Y' \in B_1, Y \in B_2\right) \leq (Const) |B_1||B_2|.
$$
This proves that the process satisfies the hypotheses  of Theorem \ref{theorem:key} with $h=1$.
The conclusion is that for $n\geq 3$, the number of crossings of $Y$  has a finite moment of order
\begin{equation*}
p =2k -3.
\end{equation*}
\end{example}

\begin{example}[{\bf Regularized processes}]
In this example, we consider the number of crossings with a level $u$ 
of a regularized diffusion. We depart from a diffusion  $\X=\{X(t):t\geq 0\}$ 
defined as the solution of the stochastic differential equation
\begin{equation*}
X(t)=x_0+\int^t_0b(s,X(s))ds+
\int^t_0\sigma(s,X(s))dW(s),\quad t\geq 0.
\end{equation*}
Here, $W=\{W(s):s\geq 0\}$ is a standard Brownian motion, $x_0\in\R$, and 
$b,\sigma:[0,\infty)\times\R\to\R$ are Lipschitz w.r.t. the second variable, 
that is, for $T>0$
there exists $K_T>0$ such that 
for $x,y\in\R$ and $0\leq s\leq T$ it holds that 
\begin{equation*}
|b(s,x)-b(s,y)|+|\sigma(s,x)-\sigma(s,y)|\leq K_T|x-y|.
\end{equation*}
Assume also the linear growth condition
$$
|b(s,x)|+|\sigma(s,x)|\leq K_T(1+|x|).
$$
Then, the above equation has a unique strong solution that satisfies
\begin{equation*}
\sup_{0\leq t\leq T}\E(X(t)^2)\leq H_T<\infty.
\end{equation*}
for a constant $H_T$ (see Theorem 7.1 in \cite{Borodin}).
The regularized diffusion is defined by
\begin{equation*}
X_\Psi(t)=(\Psi\ast X)(t),
\end{equation*}
where $\Psi\colon\R\to[0,\infty)$ is a $C^\infty$-function
with support contained in $[-1,1]$ that integrates   to one.   
Thus, $X_\Psi$ is obtained from $\X$ by path-wise convolution with $\Psi$.
We consider two cases:
\begin{enumerate}
\item[Case I.] We assume that $b=0$ and 
$\sigma:\R^+\times\R\to\R$ is strictly positive and $C^3$.
\item[Case II.] The volatility $\sigma$ is strictly positive and $C^3$ as in Case I, and there exist constants $B_T, $$c_T$ and $C_T$ such that 
$$
|b(s,x)|\leq B_T,\quad 0<c_T\leq\sigma(s,x)\leq C_T
$$ 
for all $0\leq s\leq T$ and $x\in\R$.
\end{enumerate}
We start with Case I. 
With regards to the hypotheses of Theorem  \ref{theorem:key}, we have that   
$X_\Psi$ is $C^\infty$ since it inherits the regularity of $\Psi$.
Furthermore, for $a$ large enough (depending on $\Psi$) 
 the random variable $X_\Psi(t)$ 
has a uniformly bounded density on an interval $[a,T]$ for all finite $T$, see Lemma 3.1 in \cite{nw}. 
Besides, a direct computation gives
$$
{|}X^{(h)}_\Psi{|}_\infty\leq c\cdot {|}X{|}_\infty,
$$
with $c=\int|\Psi^{(h)}(u)|du$ and the infinity norm is taken on the interval $[a,T]$.
Hence, it suffices to bound the moments of ${|}X{|}_\infty$. 
Now, the Burkholder-Davis-Gundy inequality {\color{black} \cite[Th.48]{protter} }gives
$$
\E({|}X{|}^2_\infty)\leq C_1 \E([X]_T),
$$
 {\color{black} with}  $[X]_T$ the quadratic variation of $X$ on $[0,T]$. 
Now, by Theorem II-29 in Protter \cite{protter}, we have
\begin{align*}
[X]_T&=\left[\int^T_0\sigma(s,X(s))dW(s)\right]_T=\int^T_0\sigma(s,X(s))^2ds\\
  &\leq K_T^2\int^T_0(1+|X(s)|)^2ds\leq
  2K_T^2\int^T_0(1+|X(s)|^2)ds.
\end{align*}
Taking expectations, we have
\begin{align*}
\E([X]_T)\leq 2K_T^2\int^T_0(1+\E(X(s))^2)ds\leq 2K_T^2T(1+H_T)<\infty.
\end{align*}
Hence, by Theorem \ref{theorem:key} with $h=0$, $m=2$ and arbitrary $k$, we obtain that 
$$
\E(N_u^p)<\infty,
$$
for all $p$, as obtained in \cite{nw}.

For Case II, we apply Girsanov's theorem \cite[Th. 10.1-10.2]{Borodin}. 
We then have two SDE
\begin{align*}
dX_0(t)&=\sigma(t,X_{\color{black}0}(t))dW(t)\\
dX(t)&=b(t,X(t))dt+\sigma(t,X(t))dW(t).
\end{align*}
Consider the process density:
$$
\rho(t)=\exp\left(
\int_0^t{b(s,X(s))\over\sigma(s,X(s))}dW(s)-\frac12\int_0^t\left({b(s,X(s))\over\sigma(s,X(s))}\right)^2ds
\right).
$$
Then, Girsanov's Theorem states that for an arbitrary function
on the trajectories of the process $F\colon C([0,T],\R)\to\R$  we have
$
\E\left(F(X)\right)=\E\left(\rho(T)F(X_0)\right).
$
If $F$ is the $p$-power of the number of crossings on $[0,T]$, we have
\begin{align*}
\E\left(
N_u(I,X)^p
\right)
=
\E\left(
\rho(T)
N_u(I,X_0)^p
\right)\leq 
\E\left(
\rho(T)^2
\right)^{1/2}\E\left(
N_u(I,X_0)^{2p}
\right)^{1/2},
\end{align*}
applying the H\"older's inequality ($X$ and  $X_0$ are defined above).
To bound the first expectation in the r.h.s. consider
$$
\rho(t)^2=\exp\left(
2\int_0^t{b(s,X(s))\over\sigma(s,X(s))}dW(s)-2\int_0^t\left({b(s,X(s))\over\sigma(s,X(s))}\right)^2ds
\right)
\exp\left(
\int_0^t\left({b(s,X(s))\over\sigma(s,X(s))}\right)^2ds
\right)
$$
Then,
$$
\rho(t)^2\leq \exp\left(
2\int_0^t{b(s,X(s))\over\sigma(s,X(s))}dW(s)-2\int_0^t\left({b(s,X(s))\over\sigma(s,X(s))}\right)^2ds
\right)
 \exp\left(\left({B_T\over c_T}\right)^2T\right),
$$
and
{\color{black} since the first exponential is a martingale, we get}
$$
\E\rho(T)^2\leq 
 \exp\left(\left({B_T\over c_T}\right)^2T\right)<\infty.
$$
As, {\color{black} for all $p$},  $\E\left(N_u(I,X_0)^{2p}\right)$ is finite by Case I, the finiteness of 
$\E\left(N_u(I,X)^p\right)$ follows. 
\end{example}

\section{Shot noise processes} \label{s:sn1}
The stationary shot noise process $\X=\{X(t)\colon t\in\R^d\}$ is defined by
\begin{equation}\label{eq:sn} 
X(t)=\sum_i\beta_ig(t-\tau_i),
\end{equation} 
where $(\beta_i)$ {\color{black} is}  a sequence of i.i.d. random, the ``impulse",
variables, $(\tau_i)$ is a Poisson field on $\R^d$ with
 constant intensity $\lambda$, and $g:\R^d\to\R$ is  some function called
the kernel function.  Following \cite{bd}, we assume that $\beta_1$ is
an integrable random variable and that $g\in L^1(\R^d)$. This ensures the a.s. convergence of
the series in \eqref{eq:sn}.  
A key issue to apply Theorem \ref{theorem:key} is the verification of condition  {\color{black} (H4)} with $h=0$,
which requires the boundedness of a density.
This is a delicate issue as was previously noticed by Bierm\'e and Desolneux in \cite{bd} (see also \cite{bd1}),
that is the main reference of this section from where we borrow the presentation and notations.  
We consider below $d=1$, 
the case $d>1$ (as well as shot noise random fields defined on the sphere) 
will be considered in Section \ref{snp}. 
We begin by  specializing Theorem \ref{theorem:key} to the present situation.
\begin{corollary}\label{thm:shotnoiseR}
Consider a stationary shot noise process \eqref{eq:sn} satisfying condition 
{\rm(H1)} for some $k\geq 1$,
{\rm(H2)} for some $m\geq 1$ and $k$ above, and 
{\rm(H3)} for $h=0$.
Then,
$$
\E (N_u^p)<\infty
$$
for 
\begin{equation}\label{eq:pnw}
p<{m\over m+1}(k-1).
\end{equation}
\end{corollary}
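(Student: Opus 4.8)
The plan is to obtain Corollary~\ref{thm:shotnoiseR} as the direct specialization of Theorem~\ref{theorem:key} to the case $h=0$. The three hypotheses (H1), (H2) and (H3) are assumed here exactly as the theorem requires, with $h=0$ in (H3) so that only the one--dimensional density of $X(t)$ must be bounded near $u$; hence there is nothing to verify beyond invoking the theorem. I would emphasize that we are claiming only \emph{finiteness} of the moment, which is the first conclusion of Theorem~\ref{theorem:key} and rests solely on (H1)--(H3). The stronger quantitative estimate for $\E(N_u^p)$ needs in addition (H4), but that is not part of the present statement, so (H4) does not enter the argument.

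The one substantive step is the algebraic reduction of the admissibility range \eqref{eq:nw} to \eqref{eq:pnw}. Putting $h=0$ in \eqref{eq:nw} gives
\[
p<\left(k-\frac{0}{2}-\frac{1}{1+0}\right)\left(\frac1m+\frac1{1+0}\right)^{-1}
=(k-1)\left(\frac1m+1\right)^{-1}=\frac{m}{m+1}\,(k-1),
\]
which is exactly \eqref{eq:pnw}. Thus, whenever an integer $p$ satisfies \eqref{eq:pnw}, the interval \eqref{eq:nw} is met with $h=0$, and Theorem~\ref{theorem:key} delivers $\E(N_u^p)<\infty$.

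I would also remark that the hypothesis $k\geq 1$ of the corollary is consistent with the requirement $k\geq 2$ in (H1): for $k=1$ the right--hand side of \eqref{eq:pnw} is $0$, so no positive integer $p$ is admissible and the claim is vacuous, while for $k\geq 2$ the theorem applies verbatim. No genuine obstacle arises in this reduction; the real difficulty of the shot noise setting lies in establishing the density bound (H3) in the first place --- and (H4) when the explicit constant is sought --- which is deferred to the later analysis of this section rather than being part of the present corollary.
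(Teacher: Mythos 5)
Your proposal is correct and is exactly the route the paper takes: the corollary is stated as a direct specialization of Theorem \ref{theorem:key} with $h=0$, and your reduction of \eqref{eq:nw} to \eqref{eq:pnw} matches the computation already recorded in the paper's Remark on the case $h=0$. Your observations that (H4) is not needed for mere finiteness and that the case $k=1$ is vacuous are accurate and consistent with the paper.
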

The rest of the section is devoted to obtain sufficient conditions to verify this corollary.
The differentiability of the sample paths follows directly from the differentiability of the kernel $g$:
$$
X^{(k)}(t)=\sum_i\beta_ig^{(k)}(t-\tau_i),
$$
provided that the $k$-th derivative of $g$ is integrable, i.e. $g^{(k)}\in L^1(\R)$.\\

\noindent{\bf Boundedness of moments (H2).} 
We now give conditions on the impulse and the kernel in order to verify (H2) for given  $k$ and $m$.
For simplicity of exposition we assume that $I=[-1,1]$ and the general case can be treated in the same way.
We consider the partition of the real line $I_1=I=[-1,1]$ and $I_n=[-n,-n+1)\cup(n-1,n]\ (n\geq 2)$. 
In this way we can write
$$
X^{(k)}(t)=\sum_{n=1}^{\infty}\sum_{i:\tau_i\in I_n}\beta_ig^{(k)}(t-\tau_i).
$$
\begin{proposition}\label{p:dkn}
Consider the shot noise in \eqref{eq:sn}
with impulse such that $\E(|\beta_1|^m)<\infty$ for given $m\geq 1$.
Define  
$$
d_{k,n}=\sup_{t\in I_1,s\in I_n}|g^{(k)}(t-s)|,
$$
for $k\geq 1$,
and  assume that
\begin{equation}\label{eq:dk}
D_{k}=\sum_{n=1}^{\infty}d_{k,n}<\infty.
\end{equation}
Then, the condition {\rm(H2)} holds true for $m$ and $k$ as above, i.e.
$$
\E\left({|}X^{(k)}{|}_{\infty}^m\right)<\infty.
$$
\end{proposition}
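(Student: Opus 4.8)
The plan is to dominate $|X^{(k)}|_\infty$ on $I_1=[-1,1]$ by a non-negative random series whose $m$-th moment is controlled by the deterministic quantity $D_k$. Writing $S_n:=\sum_{i\colon\tau_i\in I_n}|\beta_i|$, the first step is a pointwise bound: for every $t\in I_1$ and every point $\tau_i\in I_n$ one has $|g^{(k)}(t-\tau_i)|\leq d_{k,n}$ by definition of $d_{k,n}$, so
\[
|X^{(k)}(t)|\leq\sum_{n=1}^\infty\sum_{i\colon\tau_i\in I_n}|\beta_i|\,|g^{(k)}(t-\tau_i)|\leq\sum_{n=1}^\infty d_{k,n}S_n.
\]
The right-hand side does not depend on $t$, so taking the supremum over $t\in I_1$ gives $|X^{(k)}|_\infty\leq\sum_{n}d_{k,n}S_n$ almost surely; in particular the finiteness of this bound will also yield the a.s.\ uniform convergence of the series defining $X^{(k)}$ on $I_1$.

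Next I would take $m$-th moments using Minkowski's inequality. Since $m\geq 1$ and all the summands $d_{k,n}S_n$ are non-negative, the triangle inequality for the $L^m$-norm, extended to a countable sum by monotone convergence, gives
\[
\bigl(\E(|X^{(k)}|_\infty^m)\bigr)^{1/m}\leq\sum_{n=1}^\infty d_{k,n}\bigl(\E(S_n^m)\bigr)^{1/m}.
\]
Each interval $I_n$ has Lebesgue measure $2$, so by the homogeneity of the Poisson field with constant intensity $\lambda$ together with the i.i.d.\ marks, all the $S_n$ share the common law of $S_1$; hence $\bigl(\E(S_n^m)\bigr)^{1/m}=\bigl(\E(S_1^m)\bigr)^{1/m}$ is a constant independent of $n$, and the bound collapses to $\bigl(\E(S_1^m)\bigr)^{1/m}\,D_k$. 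Using the hypothesis $D_k<\infty$, it then remains only to check that $\E(S_1^m)<\infty$.

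For this last step I would exploit the compound-Poisson structure of $S_1$. Conditionally on $M:=\#\{i\colon\tau_i\in I_1\}$, which is $\mathrm{Poisson}(2\lambda)$, we have $S_1=\sum_{j=1}^M|\beta_j|$ with i.i.d.\ impulses independent of $M$. By convexity of $x\mapsto x^m$ (the power-mean inequality), $\bigl(\sum_{j=1}^M|\beta_j|\bigr)^m\leq M^{m-1}\sum_{j=1}^M|\beta_j|^m$, so conditioning on $M$ yields $\E(S_1^m)\leq\E(M^m)\,\E(|\beta_1|^m)$. Since a Poisson variable has finite moments of every order and $\E(|\beta_1|^m)<\infty$ by assumption, we conclude $\E(S_1^m)<\infty$, and therefore $\E(|X^{(k)}|_\infty^m)\leq\E(M^m)\,\E(|\beta_1|^m)\,D_k^m<\infty$, which is precisely {\rm(H2)}.

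I expect the genuinely substantive step to be the compound-Poisson moment bound of the previous paragraph, namely turning the single-impulse hypothesis $\E(|\beta_1|^m)<\infty$ into a moment bound for the random partial sum $S_1$; everything else is the structural observation that the summability in $n$ is carried entirely by the deterministic series $D_k$, so that each far-away block $I_n$ contributes only the constant factor $\bigl(\E(S_1^m)\bigr)^{1/m}$. The only analytic care needed is in the two interchanges — passing the supremum over $t$ through to the $t$-free dominating series, and applying Minkowski to an infinite sum — both of which are justified by non-negativity and monotone convergence.
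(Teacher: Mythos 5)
Your proposal is correct and follows essentially the same route as the paper: the same decomposition $|X^{(k)}|_\infty\leq\sum_n d_{k,n}Z_n$ with $Z_n=\sum_{\tau_i\in I_n}|\beta_i|$ i.i.d.\ compound Poisson, followed by a convexity argument that yields the same bound $D_k^m\,\E(Z_1^m)$ (the paper uses Jensen's inequality on the normalized weights $d_{k,n}/D_k$ where you use Minkowski's inequality in $L^m$ --- an interchangeable step here). The only addition is that you spell out why $\E(Z_1^m)<\infty$ via the power-mean bound $\E(Z_1^m)\leq\E(M^m)\,\E(|\beta_1|^m)$, a detail the paper simply asserts.
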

\begin{proof}
We have
\begin{align*}
  {|}X^{(k)}{|}_{\infty}=&\max_{-1\leq t\leq 1}|X^{(k)}(t)|
\leq
\max_{-1\leq t\leq 1}\sum_{n=1}^{\infty}\sum_{i:\tau_i\in I_n}|\beta_i||g^{(k)}(t-\tau_i)|\\
  &\leq \sum_{n=1}^{\infty}\sum_{i:\tau_i\in I_n}|\beta_i|\max_{-1\leq t\leq 1}|g^{(k)}(t-\tau_i)|
\leq \sum_{n=1}^{\infty}\sum_{i:\tau_i\in I_n}|\beta_i|\sup_{-1\leq t\leq 1,s\in I_n}|g^{(k)}(t-s)|\\
  &=\sum_{n=1}^{\infty}d_{k,n}Z_n,
\end{align*}
where $Z_n:=\sum_{\tau_i\in I_n}|\beta_i|$. 
As the sets $I_n$ are disjoint and have the same length, the random variables
$Z_n$ are i.i.d. Each one has a compound Poisson distribution.
As $\E(|\beta_1|^m)<\infty$, we obtain 
$\E(Z_1^m)<\infty$.
We now use Jensen's inequality, based on the convergence in \eqref{eq:dk}, to obtain the following bound:
\[
  \left(\sum_{n=1}^{\infty}d_{k,n}Z_n\right)^m=
  D_{k}^m\left(\sum_{n=1}^{\infty}{d_{k,n}\over D_k}Z_n\right)^m \leq D_k^{m-1}\sum_{n=1}^{\infty}d_{k,n}Z_n^m.  
\]
Now, as the random variables $Z_n^m$ are i.i.d. and have finite moments,
\[
\E\left({|}X^{(k)}{|}_{\infty}^m\right)\leq D_k^{m-1}\sum_{n=1}^{\infty}d_{k,n}\E\left(Z_n^m\right)=
D_k^{m}
\E\left(Z_1^m\right)<\infty,
\]
it concludes that condition (H2) holds true for $k$ and $m$.
\end{proof}

\begin{remark}
Given a kernel 
$g^{(k)}\colon\R\to\R$ 
we define by 
$G^{(k)}\colon[0,\infty)\to [0,\infty)$ 
the monotone hull of 
$\max(|g^{(k)}(t)|,|g^{(k)}(-t)|),\ t\geq 0$, 
as the smallest non-increasing function that dominates 
$|g^{(k)}(t)|$ 
and 
$|g^{(k)}(-t)|$ 
for 
$t\geq 0$.
More precisely  $G^{(k)}(t) = \sup_{s>t} (|g^{(k)}(s)|,|g^{(k)}(-s)|)$.
Then, condition \eqref{eq:dk} is implied by the integrability of $G^{(k)}$.
Then, when the kernel $g^{(k)}$ decreases monotonously for large positive and large negative values, condition \eqref{eq:dk}
follows automatically from the integrability of the kernel $g^{(k)}$.
\end{remark}

\noindent{\bf Boundedness of the density  {\color{black}  (H4)}.} 
In \cite{bd1} Section 3.2, it is shown that when $\beta_1=1$ a.s., in the two following particular situations
the stationary shot noise process has a bounded density:
\begin{itemize}
\item[(a)] The kernel is $g(t)=e^{-t}\indicator_{\{t\geq 0\}}$, and the intensity $\lambda>1$.
\item[(b)] The kernel satisfies $g(t)=t^{-\alpha}$ for $t\geq A$ for some $A>0$ and $\alpha>1/2$.
\end{itemize}

We present below a generalization of the results of \cite{bd} that constitutes one of the contributions
of the present paper.
\begin{proposition}\label{proposition:B} 
Consider a shot noise process \eqref{eq:sn} with a differentiable kernel.
Denote by $T$ an exponential random variable with parameter $\lambda$.
Assume that either
\begin{itemize}
\item[\rm(A)] The impulse $\beta_1$ has a density bounded by $B$ and  either $\E(1/|g(-T)|)<\infty$ or $\E(1/|g(T)|)<\infty$.
\item[\rm(B1)]  Assume that $\E(1/|\beta_1|)<\infty$.
Assume further that $g(x)$ has a strictly negative derivative in $(0, +\infty)$,
and the function  
\[
g^+_*(t):=\inf_{0\leq s\leq t}|g'(s)|,\; (t\geq 0),
\] 
satisfies $\E(1/g^+_*(T))<\infty$.
\item[\rm(B2)]   The same as  (B1) replacing $g(x)$ by  $g(-x)$ and $g^+_*(t)$ by 
\[
g^-_*(t):=\inf_{0\leq s\leq t}|g'(-s)|,\; (t\geq 0),
\]
\end{itemize}
Then $X(0)$ has a bounded density.
\end{proposition}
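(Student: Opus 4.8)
The plan is to isolate the contribution of a single, well-chosen Poisson point to $X(0)=\sum_i\beta_i g(-\tau_i)$, show that conditionally on everything else this one contribution already produces a density bounded by an integrable random constant, and then recover the unconditional density of $X(0)$ as the expectation of these conditional densities (a uniform-in-$x$ conditional bound passes to the unconditional density upon taking expectations). Throughout I would use the standard description of a homogeneous Poisson process on $\R$ through its i.i.d.\ exponential inter-point gaps, so that the distance to the nearest point is exponential and the distance to the second nearest is $\mathrm{Gamma}(2,\lambda)$.

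For case (A) the smoothing comes from the impulse. I would condition jointly on the distance $T$ from the origin to the nearest point on a suitable side and on the contribution $R$ of all remaining points; these may be dependent, which is harmless here. Since the impulse $\beta_\ast$ at the nearest point is independent of $(T,R)$, the conditional law of $X(0)=\beta_\ast g(-T)+R$ is a rescaled, translated copy of the law of $\beta_\ast$, so a change of variables gives the conditional density $\tfrac1{|g(-T)|}f_{\beta}\!\big(\tfrac{x-R}{g(-T)}\big)\le \tfrac{B}{|g(-T)|}$, uniform in $x$. Taking expectations yields that the density of $X(0)$ is bounded by $B\,\E\!\big(1/|g(-T)|\big)$, and using the nearest point on the other side gives instead the bound with $g(T)$; either one is finite by hypothesis.

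For cases (B1) and (B2) the smoothing must come from the random \emph{position}, and the crux is to set up the conditioning so that exactly one $g$-term depends on the free position variable. For (B1) I would single out the two nearest points on the left: conditioning on the location $-s$ of the second nearest left point, on all further gaps and on every impulse, the nearest left point sits at $-T_1$ with $T_1$ uniform on $(0,s)$, and it enters $X(0)$ only through the single term $\beta_\ast g(T_1)$, all other terms being frozen into a constant $R$. Because $g'<0$ on $(0,\infty)$ the map $t\mapsto\beta_\ast g(t)$ is strictly monotone, so a change of variables gives the conditional density $\tfrac1s\,\big(|\beta_\ast|\,|g'(t_1)|\big)^{-1}$; bounding $|g'(t_1)|\ge g^+_*(t_1)\ge g^+_*(s)$ (monotonicity of $g^+_*$ together with $t_1<s$) produces the uniform-in-$x$ bound $\big(s\,|\beta_\ast|\,g^+_*(s)\big)^{-1}$. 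Taking expectations, using that $\beta_\ast$ is independent of $s$ and that $s$ has density $\lambda^2 s e^{-\lambda s}$, the factor $s$ cancels and I obtain the bound $\lambda\,\E(1/|\beta_1|)\,\E\!\big(1/g^+_*(T)\big)<\infty$ for the density of $X(0)$. Case (B2) is identical after reflecting $t\mapsto -t$ and using the nearest point on the right.

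The main obstacle is exactly this conditioning step in (B1)/(B2). A naive attempt to vary the distance to the single nearest point fails, because the contribution of the remaining points also depends on that distance, and since the impulses may change sign the full sum need not be monotone in it. Conditioning instead on the \emph{second} nearest point makes the nearest one uniform on a bounded interval and, crucially, decouples its position from all the other terms, leaving precisely one monotone term to integrate; this is what converts the hypothesis $\E(1/g^+_*(T))<\infty$ into a genuine density bound. The remaining points are routine: verifying the exponential/Gamma gap laws, the a.s.\ existence of the two nearest points on each side, and the elementary change-of-variables bound.
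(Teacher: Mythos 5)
Your proposal is correct and follows essentially the same route as the paper: isolate the contribution of the Poisson point nearest the origin, in case (A) use the bounded density of the impulse to get a conditional bound $B/|g(-T)|$, and in cases (B1)/(B2) condition on the second nearest point so that the nearest is uniform on $(0,T_2)$, apply the change-of-variables formula to the single monotone term, bound $|g'|$ below by $g^{+}_{*}(T_2)$, and integrate against the $\Gamma(2,\lambda)$ density so the factor $T_2$ cancels, arriving at the same bounds $B\,\E(1/|g(-T)|)$ and $\lambda\,\E(1/|\beta_1|)\,\E(1/g^{+}_{*}(T))$. The only cosmetic difference is that you freeze the remaining points and impulses into a constant $R$ and compute the conditional density by hand, where the paper packages the same steps through its Lemma on sums and products of independent variables with a bounded-density factor.
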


Note that Proposition \ref{proposition:B} implies {{\color{black}(H4)} with $h=0$. 
The proof requires the following simple result, that has a direct proof.
\begin{lemma}\label{lemma:simple}
Consider two independent random variables $X$ and $Y$, 
where $X$ has a density bounded by $B$. 
\par\noindent{\rm(a)}
Then, the sum $X+Y$ has a density bounded by $B$.
\par\noindent{\rm(b)}
If $\E(1/|Y|)<\infty$, the product $XY$ has a density bounded by $B\E(1/|Y|)$.
\end{lemma}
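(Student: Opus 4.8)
The plan is to prove both parts by conditioning on $Y$ and then recognizing the resulting mixture as an explicit integral against the density of $X$, which is uniformly controlled by $B$. Throughout, let $f_X$ denote the density of $X$ (so that $f_X\leq B$ pointwise) and let $\mu_Y$ denote the law of $Y$; the independence of $X$ and $Y$ is precisely what licenses the conditioning. The whole argument is measure-theoretic bookkeeping, with no probabilistic subtlety beyond Fubini's theorem applied to nonnegative integrands.

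For part (a), I would start from the tower property: for any Borel set $A\subseteq\R$,
\[
\P(X+Y\in A)=\int_\R\P(X+y\in A)\,d\mu_Y(y)=\int_\R\int_A f_X(z-y)\,dz\,d\mu_Y(y).
\]
Since the integrand is nonnegative, Fubini's theorem permits interchanging the order of integration, which exhibits $X+Y$ as a random variable with density $z\mapsto\int_\R f_X(z-y)\,d\mu_Y(y)$. The bound is then immediate: replacing $f_X(z-y)$ by its uniform upper bound $B$ and using $\mu_Y(\R)=1$ yields a density bounded by $B$.

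For part (b), the same conditioning scheme applies, but scaling by $y$ introduces the Jacobian factor $1/|y|$. First I would observe that $\E(1/|Y|)<\infty$ forces $\P(Y=0)=0$, so integration may be restricted to $\{y\neq 0\}$. For such $y$ the variable $yX$ has density $z\mapsto |y|^{-1}f_X(z/y)$, so after the same Fubini interchange $XY$ admits the density
\[
z\mapsto\int_{\{y\neq 0\}}|y|^{-1}f_X(z/y)\,d\mu_Y(y),
\]
which is bounded above by $B\int_\R|y|^{-1}\,d\mu_Y(y)=B\,\E(1/|Y|)$.

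The only point requiring any care — and the reason the statement is not entirely vacuous — is justifying that these conditional expressions genuinely define densities, namely the Fubini interchange together with the change of variables $w=z/y$ in part (b). Both steps are legitimate precisely because all integrands are nonnegative, and in (b) the integrability hypothesis $\E(1/|Y|)<\infty$ is exactly what guarantees the resulting bound is finite. I do not expect any further obstacle.
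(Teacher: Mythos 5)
Your proof is correct and is precisely the ``direct proof'' the paper alludes to but omits: conditioning on $Y$, applying Tonelli to the nonnegative integrand, and bounding $f_X\leq B$ (with the Jacobian factor $1/|y|$ and the observation $\P(Y=0)=0$ in part (b)). Nothing is missing.
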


\begin{proof}[Proof of Proposition \ref{proposition:B}]
Consider (A), assuming that $\E(1/|g(-T)|)<\infty$ (the other case is analogous).
Based on Lemma \ref{lemma:simple} and the decomposition
\begin{equation*}
X(0)=\sum_{i\colon\tau_i>0}\beta_ig(-\tau_i)+\sum_{i\colon\tau_i\leq 0}\beta_ig(-\tau_i)=:X^+(0)+X^-(0),
\end{equation*}
as $X^+(0)$ and $X^-(0)$ are independent, it is enough to see that $X^+(0)$ has a bounded density.
Define by $T_1$ and $T_2$ the first two positive occurrences of the Poisson process.
Condition on $T_2$ and apply (b) in Lemma \ref{lemma:simple} to obtain that $\beta_1g(-T_1)$ has a conditional density
bounded by $B\E(1/|g(-T_1)|\mid T_2)$.
Applying now (a) in Lemma \ref{lemma:simple}, we obtain that the sum $X^+(0)$ has a conditional
density, denote it by $f_{X^+(0)\mid T_2}(x)$ with the same bound. Finally, integrating
\[
f_{X^+(0)}(x)=\E(f_{X^+(0)\mid T_2}(x))\leq \E(B\E(1/|g(-T_1)|\mid T_2))=B\E(1/|g(-T_1)|),
\]
concluding the proof in this case.

Let us consider now case {\color {black} (B1), the proof in the case (B2) is similar}.
We see first that, conditional on $ T_2$, the random variable
 $g({\color{black} -T_1})$ has a bounded density.
In fact, by the change-of-variable formula
\[
  f_{g(-T_1)\mid T_2}(u)=
  \frac1{|g'(g^{-1}(u))|}\frac1{T_2}\indicator_{\{g^{-1}(u) \leq T_2\}}\leq {1\over T_2g^+_*(T_2)}.
\]
By (b) in Lemma \ref{lemma:simple} the product $\beta_1g(T_1)$ has a conditional density bounded by 
\begin{equation}\label{bound}
{1\over T_2g^+_*(T_2)}\E({1/|\beta_1|}).
\end{equation}
Then, by conditional independence and (a) in Lemma \ref{lemma:simple}, the sum $X^+(0)$ 
has a conditional density with the same bound.
Finally, as the density of $T_2$ is $\lambda^2t_2e^{-\lambda t_2}$,
integrating the bound \eqref{bound}, we obtain that
\[
f_{X^+(0)}(u)=\E(f_{X^+(0)\mid T_2}(u))\leq \int_0^{\infty}{1\over t_2g_*^+(t_2)}\E({1/|\beta_1|})\lambda^2t_2e^{-\lambda t_2}dt_2=\lambda 
\E({1/|\beta_1|})\E({1/g^+_*(T)}).
\]
This concludes the proof of the proposition giving  the respective bounds.
\end{proof}

\begin{corollary}
Consider a shot noise process with a $C^\infty(\R)$ kernel $g$ such that for every $k\geq 1$  
\begin{equation}\label{exp}
g ^{(k)}(t) \simeq  c_k(t) e^{\alpha t}  \text{ as  } t\to -\infty 
\quad\text{and }\quad 
g^{(k)}(t) \simeq C_k(t)e^{-\alpha t} 
\text{ as  } t\to +\infty,
\end{equation}
where $\simeq$  means equivalence  and $c_k$, $C_k$ are polynomials of an arbitrary degree. 
Assume furthermore that   $\lambda>\alpha$ and  $\E(|\beta_1|)<\infty$. 
If in addition   either 
\begin{itemize}
\item[\rm(i)] $\beta_1$ has bounded density or
\item[\rm(ii)] $\E(1/|\beta_1|)<\infty$ and  $g$, or $g(-x)$,  has a strictly negative  derivative on $(0,+\infty)$,
\end{itemize}
then 
\[
\E(N_u^p)<\infty,\quad\text{for any $p\geq 1$.}
\]
\end{corollary}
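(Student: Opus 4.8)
The plan is to reduce the statement to Corollary~\ref{thm:shotnoiseR}, applied with $h=0$ and $m=1$ but with $k$ allowed to be arbitrarily large. For such a choice the admissible range \eqref{eq:pnw} becomes $p<\tfrac12(k-1)$, so once the three hypotheses (H1), (H2) and (H4) of that corollary are verified for \emph{every} $k\geq 1$, letting $k\to\infty$ yields $\E(N_u^p)<\infty$ for all $p\geq 1$. Thus the whole argument consists in checking these three hypotheses, for arbitrary $k$, from the assumptions of the statement. Note first that $\alpha>0$, since the one-sided asymptotics in \eqref{exp} must describe a decaying (integrable) kernel.

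The regularity hypothesis (H1) is the easiest: since $\alpha>0$, each asymptotic in \eqref{exp} is a polynomial times a decaying exponential, so $g^{(k)}\in L^1(\R)$ for every $k$; together with $g\in C^\infty$ this justifies differentiating the series defining $X$ term by term to any order, producing $C^\infty$ sample paths. For (H2) with $m=1$ I would invoke Proposition~\ref{p:dkn}: it suffices to verify \eqref{eq:dk}, namely $D_k=\sum_n d_{k,n}<\infty$, together with $\E(|\beta_1|)<\infty$, the latter being assumed. For $t\in I_1=[-1,1]$ and $s\in I_n$ the argument $t-s$ stays at distance of order $n$ from the origin, so by \eqref{exp} the quantity $d_{k,n}=\sup_{t\in I_1,s\in I_n}|g^{(k)}(t-s)|$ is bounded by a polynomial in $n$ times $e^{-\alpha n}$, which is summable; hence $D_k<\infty$ and (H2) holds with $m=1$ for every $k$.

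The substantive step is (H4) with $h=0$, which I would obtain from Proposition~\ref{proposition:B}, and which is where $\lambda>\alpha$ enters. In case (i), $\beta_1$ has a bounded density, so I would use branch (A), for which it remains to show $\E(1/|g(T)|)<\infty$ or $\E(1/|g(-T)|)<\infty$, with $T\sim\mathrm{Exp}(\lambda)$. Integrating the $k=1$ asymptotic shows that $g(t)$ itself behaves like $\widetilde C(t)e^{-\alpha t}$ as $t\to+\infty$ (and symmetrically at $-\infty$), so $1/|g(t)|$ grows like $e^{\alpha t}$ up to polynomial factors; since the density of $T$ is $\lambda e^{-\lambda t}$, the integral
\[
\E\bigl(1/|g(T)|\bigr)=\int_0^\infty \frac{\lambda e^{-\lambda t}}{|g(t)|}\,dt
\]
converges at infinity precisely because $\lambda>\alpha$. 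In case (ii), $\E(1/|\beta_1|)<\infty$ and $g$ (say) is strictly decreasing on $(0,\infty)$, so I would use branch (B1). Strict monotonicity towards $0$ forces $g>0$ on $(0,\infty)$, and the remaining requirement $\E(1/g^+_*(T))<\infty$ follows because $g^+_*(t)=\inf_{0\le s\le t}|g'(s)|$ behaves like $|g'(t)|\simeq |C_1(t)|e^{-\alpha t}$ for large $t$, so $1/g^+_*(t)$ again grows like $e^{\alpha t}$ and $\lambda>\alpha$ makes the corresponding integral finite.

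The main obstacle is exactly this verification of (H4): one must dominate the reciprocal $1/|g|$ (or $1/g^+_*$), which blows up exponentially at infinity, by the exponential tail of $T$, the decisive inequality being $\lambda>\alpha$. The delicate point is the behavior of $|g|$ on compact sets and near any zeros of $g$, where $1/|g|$ develops non-integrable singularities. In case (ii) strict monotonicity rules these out and simultaneously pins down $g^+_*$; in case (i) one must select the half-line ($+T$ or $-T$) on which $g$ stays bounded away from $0$, the bounded-density branch (A) then absorbing the impulse $\beta_1$ through Lemma~\ref{lemma:simple}. Once (H1), (H2) and (H4) are secured for all $k$, a final application of Corollary~\ref{thm:shotnoiseR} followed by $k\to\infty$ completes the proof.
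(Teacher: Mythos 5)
Your proposal is correct and follows essentially the same route as the paper: reduce to Corollary \ref{thm:shotnoiseR} with $m=1$, $h=0$ and $k$ arbitrary, verify (H2) via Proposition \ref{p:dkn} using the exponential--polynomial decay of $g^{(k)}$, and obtain the bounded density from branch (A) or (B) of Proposition \ref{proposition:B}, with $\lambda>\alpha$ ensuring $\E(1/|g(\pm T)|)<\infty$ (resp.\ $\E(1/g^{+}_{*}(T))<\infty$). Your write-up is in fact more explicit than the paper's terse proof, e.g.\ in deducing the asymptotics of $g$ itself from those of $g'$ and in flagging the (shared, unaddressed) issue of possible zeros of $g$ on compact sets.
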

\begin{proof}  We apply Corollary \ref{thm:shotnoiseR} with $m=1$ and arbitrarily large $k$. 
In view of \eqref{eq:pnw}, we derive the result for arbitrary $p$.
The kernel is differentiable for any $k$. The relation \eqref{exp} ensures condition \eqref{eq:dk},
as it gives the integrability of the derivatives of any order of the kernel.
It remains to see that the density of $X(0)$ is bounded, and this follows in case (i) from the
fact that $\lambda>\alpha$ giving $\E (1/|g(-T_1)|)<\infty$, and the boundedness of the density follows by 
(A) in Proposition \ref{proposition:B}.
In case of (ii), we apply (B) in Proposition \ref{proposition:B}.
In this way we conclude the proof.
\end{proof}

\section{Random fields from $\R^d$ and $S^d$ to $\R$}\label{section:co-one}
We begin with the case when the domain is $\R^d$. 
Consider a real valued random field $\X=\{X(t)\colon t\in \R^d\}$ and
define, for a given $u\in\R$,
the level set $C_u$ restricted to $\D_a$ (the closed  ball with radius $a$ centered at the origin) 
by the formula
\[
C_u=C_u(\D_a):=\{t\in \D_a\colon X(t)=u\}.
\]
Observe that under regularity conditions, the level set $C_u$ is almost surely a manifold of co-dimension one. 
{\color{black} 
In fact, Theorem 1 in \cite{belyaev} (see \cite{w-ln} for a proof) 
states that $\{t\in \D_a\colon X(t)=u, \nabla X(t)\textrm{ is singular}\}$ 
is a.s. empty and, provided the a.s. absence of critical points, 
the Implicit Function Theorem gives a local chart a.s. 
}
The aim of the first part of this section is to generalize Theorem \ref{theorem:key} into this framework.

To this end, we compute the $(d-1)$-dimensional Hausdorff measure  of a Borel set $B{\color{black}\subset \D_a}$, based on
Crofton's formula (\cite{Morgan} p.  31): \begin{equation}\label{eq:igm}
  \mathcal{H}_{d-1}(B)=c_{d-1}(a)\int_{v\in S^{d-1}}\int_{y\in
v^\perp\cap \D_a} \# \left\{ B\cap \ell_{v,y} \right\} \,dv_a^\perp(y)\,
dS^{d-1}(v).  \end{equation}
Here $dS^{d-1}$ is the uniform probability on the sphere
$S^{d-1}$,  $dv_a^\perp$ is the uniform probability on $v^\perp\cap \D_a$,
and $\ell_{v,y}$ is the affine linear space 
$\{y+tv:\, t\in\R\}$. 
The constant $c_{d-1}(a)$ can be easily computed in the particular case
of the boundary of $\D_a$, 
namely 
$S^{d-1}_a:=\{t\in \R^{d}:\,{|}t{|}=a\}$,  
yielding, 
\begin{equation}\label{eq:cd}
c_{d-1}(a)= \frac{1}{2}\mathcal{H}_{d-1}(S^{d-1}_a)=
\frac{\pi^{d/2}}{\Gamma(d/2)}a^{d-1}.
\end{equation}
\begin{remark}
  When $B\subset \D_a$ is a codimension one smooth submanifold
  $\mathcal{H}_{d-1}(B)$ coincides
  with the (induced) Riemannian measure. 
\end{remark}

In view of \eqref{eq:igm}, to obtain the finiteness of the moments of
$\mathcal{H}_{d-1}(C_u)$, the idea is to give conditions on $\X$ that
ensure that its restriction to an arbitrary line in $\R^d$ 
verifies the hypothesis of Theorem \ref{theorem:key}
for some values of $m,h$ and $k$, as stated in the following result.

\begin{theorem} \label{t:codim}     
Consider a real valued random field $\X=\{X(t)\colon t\in \D_a\}$. 
Assume that $\X$ satisfies 
\begin{itemize}
\item[\rm(H1${}^\prime$)] The sample paths of $\X$ are $C^k(\D_a)$, {\color{black} $k \geq 2$}.
\item[\rm(H2${}^\prime$)] For some $m=1,2,\dots$ there exists a constant $D_m$ such that
\begin{equation*}
\max_{{|}v{|}=1}\E\left(\left|{\partial^kX\over\partial v^k}\right|_{\infty}^m\right)\leq D_m,
\end{equation*}
where ${\partial^k \over\partial v^k}$ denotes the 
$k$-th directional derivative w.r.t. the vector $v$.
\item[\rm(H3${}^\prime$)] For some $0\leq h\leq k$ there exists a constant $C>0$ such that
the joint density of 
$$
X(t),{\partial X(t)\over\partial v},\dots,{\partial^hX(t)\over\partial v^h}
$$ 
is bounded by $C$ uniformly in $t\in \D_a$, 
$v\in S^{d-1}$, and   {\color{black}  $(u_1,\ldots, u_{h+1 }) \in \R^{h+1}$}. 
\end{itemize}
 Then, for $p\geq 1$  satisfying  \eqref{eq:nw},
the $p$-th moment of  the measure of the level set
  $\mathcal{H}_{d-1}(C_u)$ is finite and is bounded by
$$
     (c_{d-1}(a))^p
   \,\left((k-1)^p+D_m\cdot E_{\alpha,k,p}+C|2a|^{(h+1)(k-h/2)}\cdot
   D_{\alpha,k,h,p}\right),
$$
where $\alpha$ and the coefficients  $E_{\alpha,k,p}$ and $D_{\alpha,k,h,p}$ are as in Theorem \ref{theorem:key}.
\end{theorem}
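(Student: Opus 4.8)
The plan is to reduce everything to the one-dimensional Theorem~\ref{theorem:key} through the Crofton representation~\eqref{eq:igm}. First I would fix a direction $v\in S^{d-1}$ and a base point $y\in v^\perp\cap\D_a$, and restrict the field to the chord $\ell_{v,y}\cap\D_a$: this yields the one-dimensional process $X_{v,y}(t):=X(y+tv)$, defined on the interval $I_{v,y}:=\{t\in\R\colon y+tv\in\D_a\}$. By the chain rule its derivatives are directional derivatives of the field, $X_{v,y}^{(j)}(t)=\frac{\partial^j X}{\partial v^j}(y+tv)$ for $0\le j\le k$, and the geometric quantity in Crofton's formula is exactly a crossing count: $\#\{C_u\cap\ell_{v,y}\}=N_u(I_{v,y},X_{v,y})$.

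Then I would verify that each restriction $X_{v,y}$ satisfies the hypotheses of Theorem~\ref{theorem:key}, \emph{uniformly} in $(v,y)$. Hypothesis (H1) follows from (H1${}^\prime$) and the chain rule, with the same $k\ge2$. Hypothesis (H2) holds since $\E\big(|X_{v,y}^{(k)}|_\infty^m\big)=\E\big(|\partial^kX/\partial v^k|_\infty^m\big)\le D_m$ by (H2${}^\prime$). For (H3)--(H4), the joint law of $\big(X_{v,y}(t),\dots,X_{v,y}^{(h)}(t)\big)$ coincides with that of $\big(X,\partial_vX,\dots,\partial_v^hX\big)$ evaluated at $y+tv\in\D_a$, whose density is bounded by $C$; since (H3${}^\prime$) assumes this over all of $\R^{h+1}$, it is the stronger form (H4). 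As the chord length is at most the diameter, $|I_{v,y}|\le 2a$, and as the exponent $(h+1)(k-h/2)$ is positive (because $k-h/2\ge k/2>0$), the explicit bound of Theorem~\ref{theorem:key} gives, uniformly in $(v,y)$,
$$
\E\big(N_u(I_{v,y},X_{v,y})^p\big)\le (k-1)^p+D_m\,E_{\alpha,k,p}+C(2a)^{(h+1)(k-h/2)}\,D_{\alpha,k,h,p}=:K.
$$

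Finally I would combine this with~\eqref{eq:igm}. Since $dS^{d-1}$ and $dv_a^\perp$ are probability measures, Crofton's formula writes $\mathcal{H}_{d-1}(C_u)/c_{d-1}(a)$ as the average of $N_u(I_{v,y},X_{v,y})$ against the product probability $dS^{d-1}(v)\,dv_a^\perp(y)$. Applying Jensen's inequality for the convex map $x\mapsto x^p$ ($p\ge1$) to this average, then taking the field expectation and exchanging the order of integration by Tonelli (all integrands being non-negative), I obtain
$$
\E\big(\mathcal{H}_{d-1}(C_u)^p\big)\le (c_{d-1}(a))^p\int_{S^{d-1}}\int_{v^\perp\cap\D_a}\E\big(N_u(I_{v,y},X_{v,y})^p\big)\,dv_a^\perp(y)\,dS^{d-1}(v)\le (c_{d-1}(a))^p\,K,
$$
which is the asserted bound. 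The main obstacle is the uniformity in the second step: ensuring the one-dimensional hypotheses hold with constants independent of the direction and base point — this is precisely why (H2${}^\prime$) carries a maximum over $v$ and (H3${}^\prime$) demands uniformity in $t$ and $v$ — together with the routine but necessary joint measurability of $(v,y,\omega)\mapsto N_u(I_{v,y},X_{v,y})$ required to invoke Tonelli's theorem.
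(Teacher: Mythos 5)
Your proposal is correct and follows essentially the same route as the paper: apply Jensen's inequality inside the Crofton representation \eqref{eq:igm}, exchange integrals by Tonelli, and bound the crossing count on each chord uniformly via Theorem \ref{theorem:key} with $|I_{v,y}|\leq 2a$. The only difference is that you spell out the verification of (H1)--(H4) for the restricted processes and the measurability needed for Tonelli, which the paper leaves implicit.
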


\begin{proof}[Proof of Theorem \ref{t:codim}]
We first apply Jensen's inequality in \eqref{eq:igm}:
\begin{align*}
\left( \mathcal{H}_{d-1}(C_u)\right)^p&=
  \left(
c_{d-1}(a)\int_{v\in
  S^{d-1}}\int_{y\in v^\perp\cap \D_a} \#
  \left\{ C_u\cap \ell_{v,y} \right\} \, dS^{d-1}(v)\,dv_a^\perp(y)
 \right)^p\\
  &\leq (c_{d-1}(a))^p  
  \int_{v\in S^{d-1}}\int_{y\in v^\perp\cap \D_a} 
  \left(\#\left\{ C_u\cap \ell_{v,y} \right\}\right)^p \,
  dS^{d-1}(v)\,dv^\perp_a(y).
\end{align*}
Now, take expectation and apply Tonelli's Theorem, 
\begin{equation*}
\E\left( \mathcal{H}_{d-1}(C_u)\right)^p
  \leq (c_{d-1}(a))^p 
  \int_{v\in S^{d-1}}\int_{y\in v^\perp\cap \D_a} 
 \E \left(\#\left\{ C_u\cap \ell_{v,y} \right\}\right)^p \,
  dS^{d-1}(v)\,dv^\perp_a(y).
\end{equation*}
Let us apply Theorem \ref{theorem:key} to the expectation inside the integral. 
It is clear that the bound is maximal when the interval is maximal, and it corresponds to $y=0$.
So,
 \begin{equation*}
\E\left( \mathcal{H}_{d-1}(C_u)\right)^p
   \leq (c_{d-1}(a))^p
   \,\left((k-1)^p+D_m\cdot E_{\alpha,k,p}+C|2 a|^{(h+1)(k-h/2)}\cdot
   D_{\alpha,k,h,p}\right).
\end{equation*}
The expectation in the r.h.s. above is finite due to Theorem \ref{theorem:key}.
\end{proof}
{\color{black}  An easy consequence is }
\begin{corollary}
Assume that the random field in Theorem \ref{t:codim}  is {\color{black} stationary }Gaussian
with a spectral density.
Then, the moment of order $p$ of the Hausdorff measure of $C_u$  is finite
with $p= {k(k+1)\over 2}-2$.
\end{corollary}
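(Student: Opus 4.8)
The plan is to check that a stationary Gaussian field with a spectral density satisfies (H1$'$), (H2$'$), (H3$'$) of Theorem~\ref{t:codim} in the most favorable regime, namely with $h=k$ and with (H2$'$) valid for \emph{every} $m$, and then to read off the exponent from the ``Large $m$'' limit. Indeed, once (H2$'$) holds for arbitrarily large $m$ the parameter $\alpha$ in \eqref{eq:nw} may be taken arbitrarily close to $0$, so the admissible range becomes $p<(k-\tfrac h2)(h+1)-1$; the choice $h=k$ gives $p<\tfrac{k(k+1)}2-1$, and since $p$ is a positive integer this yields finiteness up to $p=\tfrac{k(k+1)}2-2$. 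This is precisely the multidimensional analogue of Example~\ref{ex:stat}, so the task reduces to verifying the two analytic inputs (H2$'$) and (H3$'$).

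For (H2$'$), I would write the $k$-th directional derivative through the multinomial expansion $\partial^kX/\partial v^k=\sum_{|\beta|=k}\binom{k}{\beta}v^\beta\,\partial^\beta X$, where $\partial^\beta X$ are the mixed partial derivatives. Since $|v^\beta|\leq 1$ for $|v|=1$, this gives $|\partial^kX/\partial v^k|_\infty\leq\sum_{|\beta|=k}\binom{k}{\beta}|\partial^\beta X|_\infty$ on $\D_a$, a bound not involving $v$. Each field $\partial^\beta X$ is again stationary Gaussian with a.s.\ bounded paths on the compact ball $\D_a$, so by the Borell--Sudakov--Tsirelson inequality (Th.~2.8 in \cite{aw}), exactly as in Example~\ref{ex:stat}, $\E(|\partial^\beta X|_\infty^m)<\infty$ for every $m$. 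Minkowski's inequality then bounds $\E(|\partial^kX/\partial v^k|_\infty^m)^{1/m}$ by a finite sum independent of $v$, so (H2$'$) holds for all $m$ with a constant $D_m$ uniform over $S^{d-1}$.

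The heart of the matter, and the step I expect to be the main obstacle, is (H3$'$): I must show that the Gaussian vector $(X(t),\partial X(t)/\partial v,\dots,\partial^kX(t)/\partial v^k)$ has a density bounded by a constant that is uniform in $t\in\D_a$, in $v\in S^{d-1}$, and in the evaluation point. By stationarity the law is independent of $t$, and a Gaussian density in $\R^{k+1}$ is globally bounded by $(2\pi)^{-(k+1)/2}(\det\Sigma_v)^{-1/2}$ independently of its mean, so uniformity in $t$ and in the evaluation point is automatic. What remains is to bound $\det\Sigma_v$ away from $0$ uniformly in $v$, where $\Sigma_v$ is the covariance of the vector above. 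Nondegeneracy for each fixed $v$ is where the spectral density enters: if $\mu$ denotes the spectral measure, a nontrivial linear combination $\sum_{j=0}^k a_j\,\partial^jX(t)/\partial v^j$ has variance $\int_{\R^d}\big|\sum_{j=0}^k a_j(i\langle\xi,v\rangle)^j\big|^2\,d\mu(\xi)$, and the integrand vanishes only on the finite union of hyperplanes $\{\langle\xi,v\rangle=\text{root}\}$, a Lebesgue-null set; since $\mu$ has a density this variance is strictly positive, so $\Sigma_v$ is positive definite (this is the content of Ex.~3.4 in \cite{aw}). Finally $v\mapsto\Sigma_v$ is continuous and $S^{d-1}$ is compact, so $\det\Sigma_v$ attains a positive minimum, furnishing the uniform bound $C$ and establishing (H3$'$) with $h=k$.

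With (H1$'$)--(H3$'$) in force for $h=k$ and all $m$, Theorem~\ref{t:codim} applies, and the arithmetic of the first paragraph gives finiteness of $\E(\mathcal{H}_{d-1}(C_u))^p$ for $p=\tfrac{k(k+1)}2-2$, as claimed.
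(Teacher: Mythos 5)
Your proof is correct and follows essentially the same route as the paper, which simply restricts the field to a line and invokes the one-dimensional stationary Gaussian example (Borell--Sudakov--Tsirelson for (H2$'$) with arbitrary $m$, nondegeneracy from the spectral density for (H3$'$), then $h=k$ and the large-$m$ limit of \eqref{eq:nw}). The only difference is one of detail: the paper's proof is a single sentence, whereas you explicitly verify the uniformity in $v\in S^{d-1}$ of the moment bound and of the density bound (via compactness and continuity of $v\mapsto\Sigma_v$), points the paper leaves implicit.
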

{\color{black} \begin{proof} The only point to check is that, if we limit the parameter set to a line, the obtained Gaussian stationary process 
has a density so it satisfy the conditions of Example \ref{ex:stat}. \end{proof}}
Let us mention that the case  $p=2$ and the case of arbitrary $p$ were considered in \cite{zazachichi} and  in \cite{AAGL}
respectively.\\


We move to random fields defined on a sphere. 
Without loss of generality, we can assume that the sphere  is 
$S^d=\{x\in\R^{d+1}\colon
|x|=1\}$. 

Given $t\in S^d$  and a tangent vector $v\in T_tS^d$, we denote by
$\partial^kX(t)\over\partial v^k$ the $k$-th order derivative of $X$
along the sphere.

For  the definition of the integral geometric measure on
  homogeneous spaces, we refer \cite{Santalo} or \cite{Howard}.
In this case, if $\M\subset S^d$ is a co-dimension one regular set, the Crofton's formula reads
\begin{equation}\label{for:croftonS}
  \mathcal{H}_{d-1}(\M)=\beta_{2,d+1}  
  \int_{E\in G_{2,d+1}}
  \#(\M\cap
  E)\,dG_{2,d+1}(E).
  \end{equation}
  Here,  $G_{2,d+1}$ denotes the Grassmanian of $2$-dimensional subspaces of
  $\R^{d+1}$, and the integral is with respect  to the induced Haar
  measure as homogeneous space of the orthogonal group of $\R^{d+1}$. This is the unique probability measure that is invariant under the
  action of this group. 
\begin{remark}
  It is easy to see that the given probability measure on $G_{2,d+1}$
    can be generated by the span of two independent standard Gaussian
    vectors on $\R^{d+1}$.
\end{remark}

\begin{remark}\label{rem:greatcir}
  Note that $E\cap S^d$ is a great circle for every $E\in G_{2,d+1}$.
\end{remark}
 
  The constant $\beta_{2,d+1}$ can be computed in the same fashion as
  in (\ref{eq:cd}). In this case, we have  
  $$
  \beta_{2,d+1} = \frac{\pi^{d/2}}{\Gamma(d/2)}.
  $$

\begin{theorem} \label{t:S}     
Consider a real valued random field $\X=\{X(t)\colon t\in S^{d}\}$. Assume that: 
\begin{itemize}
\item[\rm(H1${}^{\prime\prime}$)] $\X$ is $C^k(S^d)$ for a given $k$.
\item[\rm(H2${}^{\prime\prime}$)] there exist $m\in\N$ and a constant $D_m$ such that
\begin{equation*}
 \E\left( \max_{t\in S^d, v\in T_tS^d,{|}v{|}=1}\left|{\partial^kX(t)\over\partial v^k}\right|^m\right)\leq D_m.
\end{equation*}
\item[\rm(H3${}^{\prime\prime}$)] Let $0\leq h\leq k$. Assume that there exists a constant $C>0$ such that
the joint density of 
$$
X(t),{\partial X(t)\over\partial v},\dots,{\partial^hX(t)\over\partial v^h}
$$ 
is bounded by $C$ uniformly in $t\in S^d$ and $v\in T_tS^d,\,{|}v{|}=1$, and for  {\color{black} 
$(u_1,\ldots, u_{h+1 }) \in \R^{h+1}$}. \\
 \end{itemize}
  Then, for $p=1,2,\dots$, satisfying  \eqref{eq:nw}
the $p$-th moment of  the measure of the level curve
  $\mathcal{H}_{d-1}(C_u)$ is finite and bounded by
$$
    \beta_{2,d+1}^p   \,\left((k-1)^p+D_m\cdot E_{\alpha,k,p}+C|2\pi|^{(h+1)(k-h/2)}\cdot
   D_{\alpha,k,h,p}\right),
   $$
where  $\alpha$ and the coefficients
$E_{\alpha,k,p}, D_{\alpha,k,h,p}$ are given in
Theorem \ref{theorem:key}.
\end{theorem}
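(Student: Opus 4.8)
The plan is to mirror the proof of Theorem \ref{t:codim}, replacing the Euclidean Crofton formula by its spherical counterpart \eqref{for:croftonS}. Starting from
\[
\mathcal{H}_{d-1}(C_u)=\beta_{2,d+1}\int_{E\in G_{2,d+1}}\#(C_u\cap E)\,dG_{2,d+1}(E),
\]
I would first raise both sides to the $p$-th power and apply Jensen's inequality to the convex map $x\mapsto x^p$, using that $dG_{2,d+1}$ is a probability measure; this yields
\[
(\mathcal{H}_{d-1}(C_u))^p\leq \beta_{2,d+1}^p\int_{E\in G_{2,d+1}}(\#(C_u\cap E))^p\,dG_{2,d+1}(E).
\]
Taking expectation and invoking Tonelli's Theorem, since all integrands are nonnegative, moves the expectation inside the Grassmannian integral, so the problem reduces to bounding $\E(\#(C_u\cap E))^p$ uniformly in $E$.

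For this reduction I would use Remark \ref{rem:greatcir}: for each $E\in G_{2,d+1}$ the intersection $E\cap S^d$ is a great circle, of circumference $2\pi$ since $S^d$ is the unit sphere. Parametrizing this great circle by arc length gives a one-dimensional process $Y(s):=X(\gamma(s))$ defined on an interval of length $|I|=2\pi$, and $\#(C_u\cap E)$ is exactly the number $N_u$ of level-$u$ crossings of $Y$. The geometric point is that a great circle is a geodesic of $S^d$, so the arc-length derivatives of $Y$ coincide with the iterated directional derivatives $\partial^kX/\partial v^k$ along the unit tangent $v=\gamma'(s)$; this is precisely the quantity controlled by (H2${}^{\prime\prime}$) and (H3${}^{\prime\prime}$). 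Consequently (H1${}^{\prime\prime}$) gives (H1) for $Y$, (H2${}^{\prime\prime}$) gives (H2) with the same $D_m$, and (H3${}^{\prime\prime}$) gives both (H3) and (H4) with the same $C$, the uniformities in $t$ and $v$ guaranteeing that these bounds are independent of the chosen $E$.

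With the hypotheses verified, I would apply Theorem \ref{theorem:key} to $Y$ with interval length $2\pi$, obtaining
\[
\E(N_u^p)\leq (k-1)^p+D_m\cdot E_{\alpha,k,p}+C|2\pi|^{(h+1)(k-h/2)}\cdot D_{\alpha,k,h,p},
\]
which, crucially, does not depend on $E$ because every great circle has the same circumference. Since $dG_{2,d+1}$ is a probability measure, integrating this constant bound over $G_{2,d+1}$ leaves it unchanged, and multiplying by the factor $\beta_{2,d+1}^p$ from the Jensen step produces exactly the asserted estimate. I expect the only genuinely delicate step to be the geometric identification of the second paragraph: one must confirm that the ``derivative along the sphere'' in (H2${}^{\prime\prime}$)--(H3${}^{\prime\prime}$) is the arc-length derivative along the geodesic, so that no extra curvature terms intervene, and that the crossing count and its finiteness are unaffected by treating the closed great circle as an interval, the level $u$ being a.s. not attained at the chosen base point, as in the Euclidean case.
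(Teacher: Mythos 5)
Your proposal is correct and follows essentially the same route as the paper: the paper's proof simply notes via Remark \ref{rem:greatcir} that $\#(C_u\cap E)$ counts crossings of a one-dimensional process on a great circle and then repeats the Jensen--Tonelli argument of Theorem \ref{t:codim}, which is exactly your argument. Your explicit verification that the arc-length derivatives along the great circle coincide with the directional derivatives appearing in (H2${}^{\prime\prime}$)--(H3${}^{\prime\prime}$) is a detail the paper leaves implicit, but it matches the paper's own definition of $\partial^kX/\partial v^k$ via the parametrization $\gamma_{t,v}$.
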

\begin{proof}
  From Remark \ref{rem:greatcir} and (\ref{for:croftonS}) applied to $\M=C_u$, we observe that
$$
\#(C_u\cap E)=\#\{t\in E\cap S^d\colon X(t)=u\}$$
is the number of crossings of a process defined on a  great circle of
  radius $1$ in such a way that Theorem \ref{theorem:key} can be
  applied. Now, the proof follows the same lines as proof of Theorem
  \ref{t:codim}.
\end{proof}

\section{Shot noise random field}\label{snp}
In this section, we first consider a shot noise defined in $\R^d$ 
and then on the sphere $S^d$.

Consider a shot noise random field $X\colon\R^d \to \R$ defined by \eqref{eq:sn},
where we now consider  the case $d>1$.
We are interested in the finiteness of high-order moments of the measure of level sets of this differentiable random field which is
restricted to the closed centered ball $\D_a$. For the computation of expectations of excursion sets of shot noise random fields with realizations with bounded variation 
(possibly discontinuous), see \cite{bd2}.

The following result is a direct application of Theorem \ref{t:codim}. 
\begin{corollary}\label{theorem:4}
Consider a shot noise random field \eqref{eq:sn} satisfying condition 
{\rm(H1${}^\prime$)} for some $k\geq 1$,
{\rm(H2${}^\prime$)} for some $m\geq 1$ and $k$ above, and 
{\rm(H3${}^\prime$)} for $h=0$.
Then,
\begin{equation}\label{eq:hausdorff}
\E (\mathcal{H}_{d-1}(C_u(\D_a))^p)<\infty
\end{equation}
for 
\begin{equation*}
p<{m\over m+1}(k-1).
\end{equation*}
\end{corollary}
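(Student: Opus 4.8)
The plan is to invoke Theorem~\ref{t:codim} verbatim, since the three structural hypotheses (H1$'$), (H2$'$) and (H3$'$) are assumed directly in the statement, the last one specialized to $h=0$. Consequently no new regularity estimate, moment bound, or density bound has to be produced at this stage; establishing those hypotheses for concrete shot noise fields is deferred to the remainder of the section, exactly as their one-dimensional counterparts were handled through Proposition~\ref{proposition:B} en route to Corollary~\ref{thm:shotnoiseR}. In particular, the restriction of the field to any line in $\D_a$ inherits $C^k$ paths from (H1$'$), and the directional versions (H2$'$), (H3$'$) feed directly into the line-by-line application of Theorem~\ref{theorem:key} that underlies Theorem~\ref{t:codim}.

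With the hypotheses granted, the single substantive step is to specialize the admissibility condition \eqref{eq:nw} to $h=0$. First I would evaluate the two factors appearing there,
\[
k-\frac{h}{2}-\frac{1}{1+h}=k-1
\qquad\text{and}\qquad
\left(\frac{1}{m}+\frac{1}{1+h}\right)^{-1}=\frac{m}{m+1}
\quad(h=0),
\]
so that \eqref{eq:nw} collapses to $p<\frac{m}{m+1}(k-1)$, precisely the range asserted in the corollary. Theorem~\ref{t:codim} then yields the finiteness of $\E\big(\mathcal{H}_{d-1}(C_u(\D_a))^p\big)$ for every integer $p$ in that range, together with the explicit bound displayed there (with $|2a|$ playing the role of $|I|$).

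There is essentially no obstacle in this deduction: it is purely the arithmetic simplification of \eqref{eq:nw}, mirroring the passage from Theorem~\ref{theorem:key} to Corollary~\ref{thm:shotnoiseR} in dimension one. The only point deserving a word of care is the existence of an admissible exponent $\alpha$ feeding the constants $E_{\alpha,k,p}$ and $D_{\alpha,k,h,p}$: at $h=0$ the window $\tfrac{p}{m}<\alpha<k-1-p$ inherited from Theorem~\ref{theorem:key} is nonempty exactly when $p<\frac{m}{m+1}(k-1)$, so the stated range is not merely sufficient but is the sharp threshold for which the method returns a finite bound. Finally I would note that the case $k=1$ is vacuous, since then the range $p<\frac{m}{m+1}(k-1)=0$ contains no admissible $p\geq1$; for every $k\geq 2$ the requirement $k\geq 2$ of (H1$'$) in Theorem~\ref{t:codim} is met and the conclusion follows as above.
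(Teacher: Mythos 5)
Your proposal is correct and matches the paper, which offers no separate argument beyond the remark that the corollary ``is a direct application of Theorem~\ref{t:codim}'': you invoke that theorem with the assumed hypotheses and correctly reduce \eqref{eq:nw} at $h=0$ to $p<\frac{m}{m+1}(k-1)$, including the check that the window for $\alpha$ is nonempty exactly on that range. Your observation that $k=1$ makes the statement vacuous (while Theorem~\ref{t:codim} formally requires $k\geq 2$) is a sensible clarification the paper leaves implicit.
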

In the following paragraphs, we obtain sufficient conditions to verify this corollary.
The differentiability of the trajectories follows directly from the differentiability of the kernel $g${\color {black}, see \cite{bd2}}.
Consider a multi-index $\alpha=(\alpha_1,\dots,\alpha_d)$ of non-negative integers such that 
$|\alpha|=\sum_i\alpha_i=k$ and the usual notation for the partial derivatives. Then,
\[
{\partial^{k} X(t)\over \partial t^\alpha}=\sum_i\beta_i{\partial^{k} g(t)\over \partial t^\alpha}(t-\tau_i),
\]
is a shot noise provided that the $\alpha$-th
derivative of $g$ is integrable.\\

\noindent{\bf Boundedness of moments (H2$^\prime$).} 
For simplicity of exposition, we take $a=1$, i.e. the unitary ball $\D_1$.
Define the partition of $\R^d$ given by $A_1=\D_1$ and
\[
\{A_n=\D_{r_n}\setminus \D_{r_{n-1}}\colon n=2,3,\dots\},
\]
where the sequence $(r_n)$ has $r_1=1$ and is such that the volume of each set $A_n$ equals the volume of $A_1$. 
In this way, we can write
\[
\frac{\partial^{k} X}{\partial v^k}(t)
=\sum_{n=1}^\infty\sum_{i:\tau_i\in A_n}\beta_i \frac{\partial^{k} g}{\partial v^k}(t-\tau_i).
\]
We now present a result useful to verify the moment condition (H2$^\prime$). 
The proof follows the same lines as that of Proposition \ref{p:dkn} in dimension one, and is omitted.
\begin{proposition}\label{prop:snimpulse}
Consider a shot noise with impulse s.t. $\E(|\beta_1|^m)<\infty$ for given $m$, and 
kernel s.t. 
\begin{equation*}
D'_{k}=\sum_{n=1}^\infty d'_{k,n}<\infty,\text{ for given $k$},
\end{equation*}
with 
\[
d'_{k,n}=\sup_{{|}v{|}=1}\sup_{t\in A_1,s\in A_n}\left|\frac{\partial^kg}{\partial v^k}(t-s)\right|.
\]
Then, the condition {\rm(H2$^\prime$)} holds true for $m$ and $k$ as above, i.e.
\[
\sup_{{|}v{|}=1}\E\left(\left|\frac{\partial^kX}{\partial v^k}\right|_{\infty}^m\right)<\infty.
\]
\end{proposition}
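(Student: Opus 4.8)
The plan is to reduce the multidimensional claim to the one-dimensional Proposition \ref{p:dkn} by following its structure verbatim, since the only new ingredient is that derivatives are now directional. First I would bound the sup-norm of the directional derivative over the unit ball by the same telescoping argument: writing
\[
\frac{\partial^k X}{\partial v^k}(t)
=\sum_{n=1}^\infty\sum_{i:\tau_i\in A_n}\beta_i\,\frac{\partial^k g}{\partial v^k}(t-\tau_i),
\]
one passes the absolute value inside, replaces $\left|\frac{\partial^k g}{\partial v^k}(t-\tau_i)\right|$ by its supremum over $t\in A_1$, $\tau_i\in A_n$, and over all unit vectors $v$, which is exactly $d'_{k,n}$. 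This yields the pathwise bound $\left|\frac{\partial^k X}{\partial v^k}\right|_\infty\leq\sum_{n=1}^\infty d'_{k,n}Z_n$, where $Z_n:=\sum_{\tau_i\in A_n}|\beta_i|$, uniformly in $v$, so the dependence on $v$ is eliminated before taking moments.

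Next I would record that the blocks $A_n$ are disjoint and, by the construction $\var(A_n)=\var(A_1)$, carry the same Poisson intensity-mass; hence the $Z_n$ are i.i.d.\ compound Poisson variables, and $\E(|\beta_1|^m)<\infty$ gives $\E(Z_1^m)<\infty$. Then, using the convergence $D'_k=\sum_n d'_{k,n}<\infty$, I would apply Jensen's inequality to the probability weights $d'_{k,n}/D'_k$,
\[
\left(\sum_{n=1}^\infty d'_{k,n}Z_n\right)^m
=(D'_k)^m\left(\sum_{n=1}^\infty\frac{d'_{k,n}}{D'_k}Z_n\right)^m
\leq (D'_k)^{m-1}\sum_{n=1}^\infty d'_{k,n}Z_n^m,
\]
and take expectations term by term, obtaining $\E\left(\left|\frac{\partial^k X}{\partial v^k}\right|_\infty^m\right)\leq (D'_k)^m\,\E(Z_1^m)<\infty$. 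Since this bound is uniform in $v$, taking the supremum over $|v|=1$ preserves finiteness, which is precisely condition (H2$^\prime$).

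The only genuine point of care, and the step I expect to need the most attention, is the uniformity in the direction $v$: one must ensure that the interchange of $\sup_{|v|=1}$ with the sums is legitimate and that $d'_{k,n}$, which already incorporates the supremum over $v$, still produces a summable sequence. This is where the hypothesis is stated as $\sup_{|v|=1}\sup_{t\in A_1,s\in A_n}|\cdot|$ rather than a pointwise-in-$v$ bound, so the argument goes through exactly as in dimension one once the directional supremum is folded into the definition of $d'_{k,n}$. Everything else is a direct transcription of Proposition \ref{p:dkn}, which is why the full computation can be safely omitted.
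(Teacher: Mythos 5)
Your proposal is correct and is exactly the argument the paper intends: the authors omit the proof, stating only that it ``follows the same lines as that of Proposition \ref{p:dkn} in dimension one,'' and your transcription --- the equal-volume annuli $A_n$ giving i.i.d.\ compound Poisson variables $Z_n$, the directional supremum absorbed into $d'_{k,n}$ to make the pathwise bound uniform in $v$, and Jensen's inequality with weights $d'_{k,n}/D'_k$ --- is precisely that adaptation.
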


\noindent{\bf Boundedness of the density  (H3$^\prime$).} 
Introduce the volume of the $d$-dimensional ball of radius $r$ by
\begin{equation*}
V(r)=k_dr^d,\quad k_d=\pi^{d/2}/\Gamma(1+d/2).
\end{equation*}
\begin{proposition}\label{proposition:3} 
Consider a shot noise random field \eqref{eq:sn} with a differentiable kernel. 
Denote by $T_1$ the occurrence of the Poisson field closest to the origin with the Euclidean distance (a.s. defined).
Assume that either
\begin{itemize}
  \item[\rm(A)] The impulse $\beta_1$ has a density bounded by $B$ and $\E(1/|g(-T_1)|)<\infty$.
\item[\rm(B)]  $\E(1/|\beta_1|)<\infty$ and the function 
\[
G_*(r):= \sup_{u\in\R}\int_{t\in g^{-1}(u) \cap B_r }
\frac1{\left|\nabla
  g(t)\right|}d\mathcal{H}_{d-1}(t)
\] 
  satisfies 
  $\E (G_*(|T_1|))<\infty$.
\end{itemize}
Then $X(0)$ has a bounded density.
\end{proposition}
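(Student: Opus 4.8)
The plan is to follow the one-dimensional argument of Proposition~\ref{proposition:B}, isolating the Poisson point nearest to the origin. Writing $\beta_{(1)}$ for the impulse attached to $T_1$, I decompose
\[
X(0)=\beta_{(1)}\,g(-T_1)+R,\qquad R:=\sum_{\tau_i\neq T_1}\beta_i\,g(-\tau_i),
\]
and aim to apply Lemma~\ref{lemma:simple}. The obstruction is that $g(-T_1)$ and $R$ are dependent (they share the underlying Poisson field), so in each case I condition on a $\sigma$-algebra that freezes this dependence while retaining a source of absolute continuity, bound the resulting conditional density, and then integrate against the law of the nearest-neighbour distances of the field.

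For case (A), I would condition on $T_1$. Given $\{T_1=t_1\}$ the remaining points form a Poisson field on $\{|\tau|>|t_1|\}$, so $R$ becomes independent of $\beta_{(1)}$, while $g(-t_1)$ is a deterministic constant, nonzero a.s.\ because $\E(1/|g(-T_1)|)<\infty$. Since $\beta_{(1)}$ has density bounded by $B$, the variable $\beta_{(1)}g(-t_1)$ has conditional density bounded by $B/|g(-t_1)|$, and adding the conditionally independent term $R$ preserves this bound by Lemma~\ref{lemma:simple}(a). Integrating over the law of $T_1$ gives $f_{X(0)}(u)\le B\,\E(1/|g(-T_1)|)<\infty$.

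For case (B) the impulse no longer provides a density, so the absolute continuity must come from $g(-T_1)$ and $T_1$ must stay random; here I would condition on the second-nearest point $T_2$. For a homogeneous Poisson field, given $|T_2|=r$ the nearest point $T_1$ is uniform on the ball $B_r$, hence so is $-T_1$. The key step is the coarea formula, which yields the conditional density of $W:=g(-T_1)$,
\[
f_{W\mid T_2}(w)=\frac{1}{V(r)}\int_{g^{-1}(w)\cap B_r}\frac{d\mathcal{H}_{d-1}(s)}{|\nabla g(s)|}\le\frac{G_*(r)}{V(r)}.
\]
Then Lemma~\ref{lemma:simple}(b), using $\E(1/|\beta_1|)<\infty$, bounds the conditional density of $\beta_{(1)}W$ by $G_*(r)\,\E(1/|\beta_1|)/V(r)$, and Lemma~\ref{lemma:simple}(a) transfers the bound to $X(0)$, since given $T_2$ the remainder $R$ is independent of $\beta_{(1)}W$.

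It remains to integrate against the law of $|T_2|$. From $\P(|T_1|>r)=e^{-\lambda V(r)}$ and $\P(|T_2|>r)=(1+\lambda V(r))e^{-\lambda V(r)}$ one obtains $f_{|T_1|}(r)=\lambda V'(r)e^{-\lambda V(r)}$ and $f_{|T_2|}(r)=\lambda^2 V(r)V'(r)e^{-\lambda V(r)}$, so that $f_{|T_2|}(r)=\lambda V(r)f_{|T_1|}(r)$. This identity cancels the factor $1/V(r)$ and produces
\[
f_{X(0)}(u)\le\E(1/|\beta_1|)\int_0^\infty\frac{G_*(r)}{V(r)}f_{|T_2|}(r)\,dr=\lambda\,\E(1/|\beta_1|)\,\E(G_*(|T_1|))<\infty.
\]
I expect the main obstacle to be making the Poisson conditioning rigorous — namely that given $|T_2|=r$ the nearest point is uniform on $B_r$ and that $\beta_{(1)}g(-T_1)$ is then conditionally independent of $R$ — together with justifying the coarea formula on the level sets of $g$ (where $|\nabla g|\neq 0$ is needed, the integrability being exactly the content of the finiteness of $G_*$). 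The remaining manipulations are the routine bookkeeping displayed above.
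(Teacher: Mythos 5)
Your proof is correct and follows essentially the same route as the paper's: decompose $X(0)$ by isolating the Poisson point nearest the origin, condition so that Lemma \ref{lemma:simple} applies, obtain the conditional density of $g(-T_1)$ via the coarea formula in case (B), and integrate against the law of $|T_2|$ using the explicit nearest-neighbour densities (Lemma \ref{densities}), arriving at the same bounds $B\,\E(1/|g(-T_1)|)$ and $\lambda\,\E(1/|\beta_1|)\,\E(G_*(|T_1|))$. The only (harmless) difference is that in case (A) you condition on $T_1$ rather than on $T_2$ as the paper does.
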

The following auxiliary result is needed in the proof under (B). 
Its proof is elementary and  thus omitted. 
\begin{lemma}\label{densities}
 Denote by $(T_n)$ the occurrence of the Poisson field
  ordered by the Euclidean distance to the origin.
Then, the random variables $|T_1|$ and $|T_2|$ have the following densities:
  \begin{equation*}
  f_{|T_1|}(x)=d\lambda k_de^{-\lambda k_d x^d} x^{d-1},
 \quad
  f_{|T_2|}(x)=d\lambda^2 k_d^2e^{-\lambda k_d x^d} x^{2d-1}.
  \end{equation*}
\end{lemma}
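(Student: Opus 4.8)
The plan is to reduce everything to the defining property of the homogeneous Poisson field: for any Borel set $A\subset\R^d$ the count $N(A):=\#\{i\colon\tau_i\in A\}$ is Poisson distributed with mean $\lambda\,V(A)$, where $V(A)$ is the Lebesgue volume of $A$, and counts over disjoint sets are independent. Since the radii $|T_n|$ are obtained by ordering the occurrences by Euclidean distance to the origin, the events of interest translate directly into statements about $N(\D_x)$, the number of occurrences in the centered ball $\D_x$ of radius $x$, whose mean is $\mu(x):=\lambda V(x)=\lambda k_d x^d$.

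First I would treat $|T_1|$. The occurrence closest to the origin is at distance larger than $x$ precisely when no occurrence lies in $\D_x$, so $\{|T_1|>x\}=\{N(\D_x)=0\}$ and hence the survival function is
\[
\P(|T_1|>x)=e^{-\mu(x)}=e^{-\lambda k_d x^d},\qquad x\geq0.
\]
Differentiating and using $\mu'(x)=d\lambda k_d x^{d-1}$ gives $f_{|T_1|}(x)=\mu'(x)e^{-\mu(x)}=d\lambda k_d e^{-\lambda k_d x^d}x^{d-1}$, as claimed.

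Next I would handle $|T_2|$ in the same spirit. The second closest occurrence is at distance larger than $x$ exactly when $\D_x$ contains at most one occurrence, i.e. $\{|T_2|>x\}=\{N(\D_x)\leq1\}$; since $N(\D_x)$ is Poisson with mean $\mu(x)$, its survival function is $\P(|T_2|>x)=e^{-\mu}(1+\mu)$. Differentiating, the contribution $+e^{-\mu}\mu'$ coming from the factor $(1+\mu)$ cancels one of the two terms produced by $e^{-\mu}$, leaving the single clean term $-\mu\mu'e^{-\mu}$; thus $f_{|T_2|}(x)=\mu(x)\mu'(x)e^{-\mu(x)}=d\lambda^2 k_d^2 e^{-\lambda k_d x^d}x^{2d-1}$.

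There is no real obstacle here---the computation is elementary---and the only point demanding a little care is the bookkeeping in the derivative of $e^{-\mu}(1+\mu)$, where the partial cancellation just described is precisely what produces the compact form of $f_{|T_2|}$. As a sanity check one verifies that each survival function equals $1$ at $x=0$ and tends to $0$ as $x\to\infty$, so both expressions are genuine probability densities on $[0,\infty)$.
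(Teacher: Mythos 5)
Your proof is correct: the identification of the events $\{|T_1|>x\}=\{N(\D_x)=0\}$ and $\{|T_2|>x\}=\{N(\D_x)\leq 1\}$ with $N(\D_x)$ Poisson of mean $\lambda k_d x^d$, followed by differentiation of the survival functions, yields exactly the stated densities. The paper omits the proof as elementary, and this is precisely the standard argument the authors had in mind, so there is nothing to compare beyond noting that your computation (including the cancellation in the derivative of $e^{-\mu}(1+\mu)$) checks out.
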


\begin{proof}[Proof of Proposition \ref{proposition:3}]
This proof follows the same lines  as those of the proof of
Proposition \ref{proposition:B}.
To prove case (A),
we write
\begin{equation*}
X(0)=\beta_1g(-T_1)+\sum_{i=2}^{\infty}\beta_ig(-T_i).
\end{equation*}
Applying (b) in Lemma \ref{lemma:simple}, we obtain that $\beta_1g(-T_1)$ has a conditional density
bounded by $B\E(1/|g(-T_1)|\mid T_2)$.
Applying now (a) in Lemma \ref{lemma:simple}, by conditional
independence we obtain that the sum $X(0)$ has a conditional
density: $f_{X(0)\mid T_2}(x)$ with the same bound. Finally, integrating
$$
f_{X(0)}(x)=\E(f_{X(0)\mid T_2}(x))\leq \E(B\E(1/|g(-T_1)|\mid T_2))=B\E(1/|g(-T_1)|),
$$
concluding the proof in this case.

Let us consider now case (B).
Applying the co-area formula we see that, conditional on $T_2$, the random variable
$g(-T_1)$ has a bounded density:
$$
f_{g(-T_1)\mid T_2}(u)
=\int_{t\in g^{-1}(u) \cap \D_{|T_2|} }
\frac1{|\nabla g(t)|}\frac1{k_d|T_2|^d}
  \mathcal{H}_{d-1}(dt)
\leq { G_*(|T_2|)\over k_d|T_2|^d}.
$$
By (b) in Lemma \ref{lemma:simple}, the product $\beta_1g(-T_1)$ has a conditional density bounded by 
\begin{equation}\label{bound2}
{ G_*(|T_2|)\over k_d|T_2|^d}\E({1/|\beta_1|}).
\end{equation}
Then, by conditional independence and (a) in Lemma \ref{lemma:simple}, the sum $X(0)$ 
has a conditional density with the same bound \eqref{bound2}. 
Integrating the bound \eqref{bound2} with the density of $|T_2|$ in Lemma \ref{densities},
we obtain that
$$
\aligned
f_{X(0)}(x)=\E(f_{X(0)\mid T_2}(x))
&\leq 
{\E({1/|\beta_1|})\over k_d}
\int_0^\infty \frac{G_*(r)}{r^d}
e^{-\lambda k_d r^d}\lambda^2 dk_d^2 r^{2d-1}
dr\\
&=   
dk_d\lambda^2\E({1/|\beta_1|})
\int_0^\infty G_*(r)
e^{-\lambda k_d r^d} r^{d-1}dr=
\lambda\E({1/|\beta_1|})\E(G_*(|T_1|)),
\endaligned
$$
which is finite because of our hypotheses, concluding the proof.
\end{proof}
\begin{example}
Consider a shot noise with a radial kernel of the form $g(t)=e^{-|t|^{2q}}$ ($q=1,2,\dots$),
and  impulse s.t. $\E(|\beta_1|)<\infty$.
Assume further that
\begin{itemize}
\item[(i)] $2q<d$, or $2q=d$ and  $\lambda k_d>1$,
\item[(ii)] $\beta_1$ has a bounded density or  $\E(1/|\beta_1|)<\infty$. 
\end{itemize}
Let us check the finiteness of the moments \eqref{eq:hausdorff}.
It is straightforward to verify  (H1${}^\prime$) and (H2${}^\prime$).
To see (H3${}^\prime$) assume first that $\beta_1$ has a bounded density.
Then, as $g$ is radial, we have
\begin{equation}\label{eq:integral}
\E\left(\frac1{g(-T_1)}\right)=\E\left(\frac1{g(|T_1|)}\right)=\int_0^\infty \frac1{g(r)}f_{|T_1|}(r)dr<\infty,
\end{equation}
due to the form of the kernel, Lemma \ref{densities}, and condition (i).

Assume now that $\E(1/|\beta_1|)<\infty$. 
The sup in the function $G_*(r)$ in (B) in Proposition \ref{proposition:3}
is  attained when $u=e^{-r^{2q}}$, giving
$
G_*(r)=e^{r^{2q}}r^{d-2q}.
$
A computation similar to \eqref{eq:integral} gives $\E G_*(|T_1|)<\infty$.
In conclusion, 
the hypotheses of Corollary \ref{theorem:4} are valid with arbitrary $k$, 
obtaining that the shot noise random field \eqref{eq:sn} verifies \eqref{eq:hausdorff} with arbitrary $p$.
\end{example}

Now, we move to the shot noise random field defined on the sphere. 
Let  $\mathcal  P$ be the standard Poisson  field on the unit sphere
$S^{d}$ of $\R^{d+1}$.
We realize $\mathcal P$ as a sequence $T_1,T_2,\dots,T_N$ of uniformly distributed random points on $S^{d}$ where $N$ is a Poisson random variable, 
all variables being  independent.

Let $g \  : [0,\pi) \to \R$ be a $\mathcal C^{\infty}$  function. 
The shot noise process is defined by
 \begin{equation} \label{e:sn}
   X(t) = \sum_{i=1}^N \beta_i g( \mbox{dist}^2(t,T_i)),\quad t \in
   S^d,
\end{equation}
where $dist$ is the geodesic distance, and the $\beta_i $'s are i.i.d.
with distribution  $F$. 

We assume the following conditions:
\begin{itemize}
  \item[\rm(H4)] $\beta_1 g(\mbox{dist}^2(t,T))$
    admits a bounded density, where $T$ has a uniform distribution on the sphere. 
  \item[\rm(H5)] The  distribution $F$  has moments of any order.
\end{itemize}
Note that (H4) is rather  weak, it is met, for example, if  $\beta_1$ has a bounded density  
and  $g$ is strictly positive on $[0,\pi]$ (apply Lemma
\ref{lemma:simple}). 
We have the following result.
\begin{proposition} \label{proposition:sn}
Consider a shot noise random field defined by
\eqref{e:sn} and assume (H4)-(H5). 
Then, 
for every level $u\neq 0$, for every compact set $W$  and for every integer $p$
$$
 \E\left( \mathcal{H}^p_{d-1}(C_{u}\cap W)\right) <\infty.
$$
\end{proposition}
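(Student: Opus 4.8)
The plan is to reduce the problem on the sphere to the already-established Theorem \ref{t:S}, exactly as Theorem \ref{t:codim} was used for fields on $\D_a$. The key observation is that the shot noise field \eqref{e:sn} is built from finitely many points $T_1,\dots,T_N$ (with $N$ Poisson), so conditionally on $N$ the field is a smooth deterministic transformation of finitely many i.i.d. uniform points and i.i.d. impulses. First I would verify that the three hypotheses (H1${}^{\prime\prime}$), (H2${}^{\prime\prime}$), (H3${}^{\prime\prime}$) of Theorem \ref{t:S} hold for this field, and then invoke that theorem (taking $h=0$ and $m$ arbitrarily large, which is legitimate by (H5)) to conclude finiteness of all moments. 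Since the condition \eqref{eq:nw} with $h=0$ and $m\to\infty$ allows arbitrarily large $p$ once $k$ is taken large enough, and the kernel $g$ is $\mathcal{C}^\infty$, every integer $p$ will be covered.

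For (H1${}^{\prime\prime}$): the field is $\mathcal{C}^\infty(S^d)$ because $g\in\mathcal{C}^\infty$ and $t\mapsto\mathrm{dist}^2(t,T_i)$ is smooth away from the cut locus; since $u\neq 0$ and we work on a compact $W$, the crossing set avoids the problematic antipodal points, so smoothness along great circles is available where needed. For (H2${}^{\prime\prime}$): I would bound the $m$-th moment of the sup over $t$ and unit tangent $v$ of the $k$-th directional derivative $\partial^k X/\partial v^k$. Writing this derivative as $\sum_{i=1}^N \beta_i \,\partial^k_v\big(g(\mathrm{dist}^2(t,T_i))\big)$, the directional derivatives of the smooth composition $g\circ \mathrm{dist}^2$ are uniformly bounded on $S^d$ by a deterministic constant $M_k$, so that
\[
\max_{t,\,|v|=1}\Big|\frac{\partial^k X}{\partial v^k}\Big| \leq M_k \sum_{i=1}^N |\beta_i|.
\]
Taking the $m$-th moment, by (H5) and the independence of $N$ from the $\beta_i$, the quantity $\E\big((\sum_{i=1}^N|\beta_i|)^m\big)$ is finite (it is an $m$-th moment of a compound Poisson sum with impulses having all moments), giving (H2${}^{\prime\prime}$) for every $m$ and every $k$.

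The main obstacle is (H3${}^{\prime\prime}$): the uniform boundedness of the joint density of $X(t)$ restricted to a great circle. With $h=0$ this reduces to showing that $X(t)$ has a density bounded uniformly in $t$, and this is precisely what (H4) is designed to provide. I would argue as in the proof of Proposition \ref{proposition:3}: isolate the contribution $\beta_1 g(\mathrm{dist}^2(t,T_1))$ of the nearest point (or any single point) whose distribution, by (H4), has a density bounded by some constant; then apply part (a) of Lemma \ref{lemma:simple} to absorb the remaining independent sum $\sum_{i\geq 2}\beta_i g(\mathrm{dist}^2(t,T_i))$ without increasing the bound. Because the uniform law on $S^d$ and the impulse law $F$ do not depend on $t$, and $g$ and $\mathrm{dist}^2$ are smooth on the compact sphere, the resulting density bound is uniform in $t\in S^d$; uniformity in the level variable $u_1\in\R$ required by Theorem \ref{t:S} follows since the argument yields a bound on the whole density, not merely near $(u,0,\dots,0)$. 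With all three hypotheses verified, Theorem \ref{t:S} applies with $h=0$, arbitrary $m$, and $k$ chosen large enough to satisfy \eqref{eq:nw} for the given $p$, yielding $\E(\mathcal{H}_{d-1}^p(C_u\cap W))<\infty$ for every integer $p$, as claimed.
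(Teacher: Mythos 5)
Your overall strategy is the same as the paper's: reduce to Theorem \ref{t:S} with $h=0$ and arbitrary $k,m$, verify (H1${}^{\prime\prime}$) and (H2${}^{\prime\prime}$) via the compound Poisson bound $\left|\partial^k X/\partial v^k\right|\leq (Const)\sum_{i=1}^N|\beta_i|$ together with (H5), and verify the density condition by isolating one term $\beta_1 g(\mathrm{dist}^2(t,T_1))$, invoking (H4), and absorbing the rest by convolution (Lemma \ref{lemma:simple}(a)). Those parts match the paper.

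There is, however, a genuine gap in your treatment of (H3${}^{\prime\prime}$). You claim that ``the argument yields a bound on the whole density'' of $X(t)$, but $X(t)$ does not have a density on all of $\R$: on the event $\{N=0\}$ (which has positive probability) the sum \eqref{e:sn} is empty and $X(t)=0$ identically, so the law of $X(t)$ has an atom at $0$. Your decomposition ``isolate the contribution of $T_1$'' silently presupposes $N\geq 1$; there is no point to isolate otherwise. The paper handles this by conditioning on $\mathcal P(S^d)=k$ for $k>0$, obtaining a density bound independent of $k$ for the conditional law, and concluding (Lemma \ref{l:sn}) that the law of $X(t)$ is an atom at zero plus a defective measure with bounded density. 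This is precisely why the proposition restricts to levels $u\neq 0$: the density bound needed for the level-$u$ argument only holds away from the atom. You instead attribute the hypothesis $u\neq 0$ to avoiding the cut locus of $\mathrm{dist}^2$, which is not its role --- the regularity of the paths is a property of the field itself and cannot be rescued by the choice of level; and in any case the level $u$ has no bearing on where $t\mapsto\mathrm{dist}^2(t,T_i)$ fails to be smooth. To repair your argument, condition on $N>0$, note the bound is uniform in $N$, and observe that the atom at $0$ is harmless because $u\neq 0$.
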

The proof follows from Theorem \ref{t:S}. \\

\noindent{\bf Differentiability (H1${}^{\prime\prime}$) and Boundedness of moments (H2${}^{\prime\prime}$).}
We now check (H1${}^{\prime\prime}$) and (H2${}^{\prime\prime}$) for arbitrary  $k$ and $m$.

Let $v$ be a norm $1$ vector orthogonal to  $t\in S^{d}$.
Since the number of realizations of $\mathcal P $ is almost surely finite, 
for any $k$, the derivative of $X$ along the sphere at $t\in S^{d}$ in the direction of $v$ is
given by
\begin{equation*} 
  \frac{\partial ^k X(t)}{\partial v^k} = \sum_{i=1}^N \beta_i
   \frac{d^k}{dz^k}g(\mbox{dist}^2(\gamma_{t,v}(z),T_i))\Big|_{z=0}, 
\end{equation*}
where $\gamma_{t,v}(z)$ is a $\mathcal{C}^\infty$ arc-length parametrization of a
great circle through $t$ in the direction of $v$ such that
$\gamma_{t,v}(0)=t$. 

Then, $X$ is $\mathcal C^k$ for all $k$, and by compactness of the sphere $S^{d}$ we obtain
$$
  \frac{\partial ^k X(t)}{\partial v^k} \leq
    (Const) \sum_{i=1}^N  \beta_i.
$$
The condition (H5) implies that the compound Poisson distribution of $\sum_{i=1}^N \beta_i$ admits moments of every order giving the desired 
result.\\

\noindent{\bf Boundedness of the density (H3${}^{\prime\prime}$) with $h=0$.} 
For simplicity, we consider the hypothesis (H3$^{\prime\prime}$) with $h=0$ and study the  marginal density of $X(t)$.
\begin{lemma} \label{l:sn}
For every $t \in S^{d}$ the distribution of $X(t)$ is the sum of 
one atom at zero and 
a defective probability with  bounded density.
\end{lemma}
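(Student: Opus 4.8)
The plan is to show that the distribution of $X(t)$ splits naturally according to the value of the Poisson count $N$. Since $\mathcal P$ consists of $N$ uniform points on $S^d$ with $N$ Poisson distributed, I would condition on $N$. On the event $\{N=0\}$ there are no impulses and $X(t)=0$ deterministically; this contributes exactly the atom at zero with mass $\P(N=0)=e^{-\lambda|S^d|}$ (the total Poisson mass on the sphere). On the event $\{N\geq 1\}$ the representation
$$
X(t)=\sum_{i=1}^N\beta_ig(\mathrm{dist}^2(t,T_i))
$$
has at least one genuine summand, and I would argue that the conditional distribution of $X(t)$ given $\{N\geq 1\}$ has a bounded density. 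The full law of $X(t)$ is then the mixture of the atom and this absolutely continuous part, which is precisely the decomposition ``one atom at zero plus a defective probability with bounded density''.

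For the absolutely continuous part, the key is to isolate a single well-behaved summand. Conditioning on $\{N\geq 1\}$, the point $T_1$ is uniform on $S^d$ and independent of the remaining points and of all the $\beta_i$. I would write
$$
X(t)=\beta_1 g(\mathrm{dist}^2(t,T_1))+R,\qquad R:=\sum_{i=2}^N\beta_ig(\mathrm{dist}^2(t,T_i)),
$$
where $R$ is independent of the pair $(\beta_1,T_1)$ once we condition on $\{N\geq 1\}$. By hypothesis (H4), the single term $\beta_1 g(\mathrm{dist}^2(t,T_1))$ admits a bounded density, say bounded by some constant $B$. Then part (a) of Lemma \ref{lemma:simple}, applied with $X=\beta_1 g(\mathrm{dist}^2(t,T_1))$ (bounded density) and $Y=R$ (independent), shows that the sum $\beta_1 g(\mathrm{dist}^2(t,T_1))+R$ also has a density bounded by $B$. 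This delivers the conditional density bound uniformly, which is exactly what is needed.

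The main obstacle is bookkeeping the conditioning so that the required independence genuinely holds: one must be careful that on $\{N\geq 1\}$ the variable $T_1$ is still uniform and that $(\beta_1,T_1)$ is independent of $R$. Because the Poisson field is realized as $N$ i.i.d.\ uniform points with $N$ independent of the points and of the $\beta_i$, exchangeability of the labels guarantees that singling out the first point is legitimate and that the remainder $R$ is conditionally independent of $(\beta_1,T_1)$ given $N$. One subtlety is that hypothesis (H4) is stated for $T$ uniform on the sphere (matching the marginal of each $T_i$), so the single-term density bound applies verbatim to $\beta_1 g(\mathrm{dist}^2(t,T_1))$. Collecting the atom from $\{N=0\}$ and the bounded-density part from $\{N\geq 1\}$, and noting that the latter carries total mass $\P(N\geq 1)<1$ (hence ``defective''), completes the proof.
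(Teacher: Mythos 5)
Your proposal is correct and follows essentially the same route as the paper: condition on the Poisson count, take the atom at zero from the event of no points, and on the complementary event isolate the first summand $\beta_1 g(\mathrm{dist}^2(t,T_1))$, whose density is bounded by (H4), then convolve via Lemma \ref{lemma:simple}(a) and note the bound is uniform in the number of points. The paper phrases this by conditioning on $\mathcal P(S^d)=k$ for each $k>0$ and observing the bound does not depend on $k$, which is exactly the bookkeeping you describe.
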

\begin{proof}
We consider the distribution of $X(t)$ conditional to $\mathcal P (S^{d}) = k$ in the case $k>0$. 
Under that condition it is well known  that the $T_i$'s, $i=1\ldots,k$, 
are i.i.d. with uniform distribution on $S^{d}$, so we write 
$$
   X(t) = \beta_1 g(\mbox{dist}^2(t,T_1)) +  \sum_{i=2}^k \beta_i g(\mbox{dist}^2(t,T_i)).
$$
The terms in this sum are independent  and, because of (H4), 
the first term has a bounded density. 
By convolution it is the same for the conditional distribution 
of $X(t)$ which admits a density bounded by the same constant. 
Since  this bounds  does not depend on $k$, it is also a bound for the density  of $X(t)$ 
conditional to 
$\mathcal P(S^d) >0$. 
Obviously,  when $\mathcal P(S^{d})=0$, $X(t)=0$ which gives the atom  at zero.
\end{proof}

\section{Non Gaussian Kac-Rice Formula}\label{section:jma}
Gaussian  KRFs are valid  under weak and simple conditions and a comprehensive reference is the book \cite{aw} that treats all the 
relevant dimensions, i.e. random fields $X$ from $\R^D$ to $ \R^{d}$  with $d \leq D$. 
These formulas give the expectation or the higher moments  of the
$\mathcal{H}_{D-d}$ Hausdorff measure of 
$$
C_u(H)=\{t\in H\colon X(t)=u\},
$$
that is the level set restricted to a compact set $H\subset\R^D$.
Though the proofs use basically the change-of-variable formula (or its generalization: the co-area formula) and have  nothing to do with 
Gaussianity, its generalization to non-Gaussian  cases  encounters difficulties 
in defining properly the quantities involved in the formulas.

For instance, in the simplest case, the KRF for the expectation when $D=d=1$ formally reads for a compact interval $I$:
\begin{equation}\label{e:jma:rice}
 \E(N_u(I))  = \int_I \E\big(|X'(t)| \big| X(t) =u \big) p_{X(t)} (u) dt.
\end{equation}
In the non-Gaussian case, the conditional expectation is defined only for almost every level $u$. 
As a consequence, the  punctual values of the r.h.s.  of  \eqref{e:jma:rice} are not defined 
unless some kind of continuity is established. 
This is why  the non-Gaussian KRF requires complicated conditions. 
See \cite{marcus} and \cite{aw} for the case $D=d=1$; 
for the case $D=d>1$ the only reference is \cite{adler}. 
To our knowledge, in the non-Gaussian case, with the exception of the complicated treatment in \cite{w-ln}, 
there exists no proof of the KRF for the case $D>d$. 

Often,  the process  or the random field  has $C^k $ paths with $k$ ``large" and the conditions can be drastically simplified. This is the object of 
this section. Note  that in its full generality,  the statement of the KRF cannot be stated in the classical form.\\

Our first main result is the following: 
\begin{theorem} \label{t:jma:1}
Let us consider  a  real-valued process $ \X =\{ X(t), t \in I \}$, where $I$ is a bounded interval of $\R$. 
Assume that there exist $u\in\R$ and $\epsilon>0 $ such that:
\begin{itemize}
\item[\rm(a)] The sample paths of $\X$ are $C^1(I)$.
\item[\rm(b)] 
$
\sup_{|v-u|<\epsilon}\E(N_v(I)^2)<\infty.
$ 
\item[\rm(c)] The density of $X(t)$ at point $v$ 
is uniformly bounded for $t\in I$ and for  $|v-u|<\epsilon$.
\end{itemize}
Then, 
 \begin{equation}\label{e:ricegen}
 \E(N_u(I))  = \lim_{\delta \to 0} \frac{1}{2\delta} \int_I \E\big(|X'(t)| \indicator_{|X(t)-u| \leq \delta} \big) dt.
\end{equation}
Suppose in addition that $X'(t)$  has a finite expectation for every $t \in I$. 
Define for $|v-u|<\epsilon$
$$
R(v) := \int_I \E\big(|X'(t)| \big| X(t) =v \big) p_{X(t)} (v) dt.
$$
Then, \eqref{e:ricegen} can be rewritten as 
\begin{equation} \label{a:jma:2}
\E(N_u(I))  = \lim_{\delta \to 0} \frac{1}{2\delta} \int_{u-\delta}^{u+\delta} R(v) dv.
\end{equation}
\end{theorem}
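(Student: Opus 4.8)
The plan is to reduce both displayed identities to a one-dimensional Lebesgue differentiation statement and then to control the interchange of limit and expectation through the uniform second-moment bound (b). First I would record the pathwise area (Banach) formula: for every realization, since the paths are $C^1(I)$ by (a),
\[
\int_I |X'(t)|\,\indicator_{|X(t)-u|\leq\delta}\,dt=\int_{u-\delta}^{u+\delta}N_v(I)\,dv .
\]
Taking expectations and applying Tonelli's theorem (all integrands being nonnegative) turns the right-hand side of \eqref{e:ricegen} into the symmetric average $\frac1{2\delta}\int_{u-\delta}^{u+\delta}g(v)\,dv$ of the function $g(v):=\E(N_v(I))$. By Cauchy--Schwarz and (b) one has $g(v)\leq\E(N_v(I)^2)^{1/2}$ uniformly bounded on $(u-\epsilon,u+\epsilon)$, so $g$ is bounded there and, in particular, $N_u(I)<\infty$ almost surely.

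The core of the argument is then to prove $\frac1{2\delta}\int_{u-\delta}^{u+\delta}g(v)\,dv\to g(u)=\E(N_u(I))$, that is, that $u$ is a point at which this symmetric average converges. I would do this at the level of trajectories: set $Y_\delta:=\frac1{2\delta}\int_{u-\delta}^{u+\delta}N_v(I)\,dv$, which by Jensen's inequality and (b) satisfies $\E(Y_\delta^2)\leq\sup_{|v-u|<\epsilon}\E(N_v(I)^2)<\infty$, so $\{Y_\delta\}_{0<\delta<\epsilon}$ is bounded in $L^2$ and hence uniformly integrable. Pathwise, since $N_u(I)<\infty$ the $u$-crossings are finitely many and isolated; localizing to disjoint neighborhoods of them and applying the area formula on each, one checks that the symmetric average around each crossing tends to its multiplicity $1$ (a transverse crossing contributes a single nearby crossing for every $v$, whereas a tangential one contributes, on average over the two sides of $u$, exactly one), so that $Y_\delta\to N_u(I)$ almost surely. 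Uniform integrability then upgrades this to $\E(Y_\delta)\to\E(N_u(I))$, which is \eqref{e:ricegen}.

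I expect the main obstacle to be precisely this pathwise limit at tangential crossings: it is the reason the statement uses the two-sided limit, and verifying that the oscillation of a $C^1$ path near an isolated tangency does not inflate the average — together with the exchange of limit and expectation, which is exactly what (b) is there to license — is the delicate point. Condition (c) enters to guarantee that $g$ is finite and well defined near $u$.

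For the second identity, under the extra hypothesis that $\E|X'(t)|<\infty$ I would disintegrate, for each fixed $t$,
\[
\E\big(|X'(t)|\,\indicator_{|X(t)-u|\leq\delta}\big)=\int_{u-\delta}^{u+\delta}\E\big(|X'(t)|\,\big|\,X(t)=v\big)\,p_{X(t)}(v)\,dv ,
\]
valid for almost every $u$ from the definition of the conditional expectation, where (c) controls $p_{X(t)}(v)$ for $|v-u|<\epsilon$. Integrating over $t\in I$ and swapping the two integrals by Tonelli identifies $\int_I\E(|X'(t)|\,\indicator_{|X(t)-u|\leq\delta})\,dt$ with $\int_{u-\delta}^{u+\delta}R(v)\,dv$. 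Dividing by $2\delta$, letting $\delta\to0$, and combining with \eqref{e:ricegen} gives \eqref{a:jma:2}.
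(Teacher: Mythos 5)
Your overall architecture (area formula, reduction to the symmetric average of $N_v$, Jensen plus the uniform $L^2$ bound to get uniform integrability, Tonelli for the second identity) coincides with the paper's. The genuine gap is exactly where you flag the ``delicate point'': the claimed pathwise convergence $Y_\delta\to N_u(I)$ a.s., and specifically the assertion that an isolated tangential crossing ``contributes, on average over the two sides of $u$, exactly one.'' That is true for a nondegenerate (e.g.\ quadratic or cubic) tangency but false for a general $C^1$ path. Take $f(t)=t^4\bigl(2+\sin(1/t)\bigr)$, $f(0)=0$: this is $C^1$, vanishes only at $t=0$ (so the zero is isolated and $N_0=1$ locally), yet for small $v>0$ the level $v$ is hit on every oscillation with $t\in[(v/3)^{1/4},v^{1/4}]$, so $N_v\asymp v^{-1/4}$ and $\frac1{2\delta}\int_{-\delta}^{\delta}N_v\,dv\asymp\delta^{-1/4}\to\infty$. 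So the oscillation of a $C^1$ path near a tangency \emph{can} inflate the average, and neither (a) nor (b) rules such paths out pathwise (pathwise one only gets $\int N_v^2\,dv<\infty$ a.s., which is compatible with $N_v\asymp|v-u|^{-1/4}$). Since uniform integrability only upgrades convergence in probability to $L^1$ convergence, this step cannot be skipped.

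You have also misassigned the role of hypothesis (c): finiteness of $g(v)=\E(N_v)$ near $u$ already follows from (b) by Cauchy--Schwarz, so in your argument (c) does essentially nothing. In the paper, (c) is precisely what closes the gap above: by Bulinskaya's lemma (Prop.\ 1.20 in \cite{aw}), the boundedness of the density of $X(t)$ near $u$ implies that a.s.\ there is \emph{no} point with $X(t)=u$ and $X'(t)=0$ (and, likewise, that $X$ does not take the value $u$ at the endpoints of $I$). With tangencies excluded a.s., Kac's counting lemma (Lemma 3.1 in \cite{aw}) gives $N_u^\delta\to N_u$ a.s., and the rest of your argument goes through verbatim. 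Your treatment of \eqref{a:jma:2} via disintegration and Tonelli is fine once $\E|X'(t)|<\infty$ is assumed.
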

\begin{proof} 
By (b) there exists a finite $K$ such that 
$$
\E(N_v^2)  \leq K,
$$
for all $|v-u|<\epsilon$.
Our conditions imply that, with probability 1, the process 
$X(t)$  cannot take the value $u$ at the two extremities of $I$. In addition by the Bulinskaya  Lemma (Prop. 1.20 in  \cite{aw})  there are, with 
probability 1, no extremes  at the level $u$. 
Thus, the Kac Lemma (Lemma  3.1 in  \cite{aw}) yields
$$
N_u = \lim_{\delta \to 0} N_u^\delta , \ \mbox{almost surely},
$$
where $N_u^\delta$ is the Kac's counter defined by
$$
 N_u^\delta := \frac{1}{2\delta} \int_I |X'(t) | \indicator_{\{|X(t)-u| \leq \delta\}}  dt.
$$
By the area formula (Prop 6.1 in  \cite{aw})  
$$
 N_u^\delta = \frac{1}{2\delta} \int_{u-\delta}^{u+\delta}   N_v dv.
$$
This, associated to the Jensen  inequality,  yields
$$
 \E \big( (N_u^\delta)^2 \big) \leq K.
$$
This implies in turn that  the family  $N_u^\delta$ is uniformly integrable. As a consequence, $N_u^\delta$  converges also in $L^1$,  yielding 
 $$
 \E(N_u)  = \lim_{\delta \to 0}  \E(N_u ^\delta).
$$
Making explicit the  r.h.s. above we get directly \eqref{e:ricegen}. 
Under the integrability of $X'(t)$, the conditional expectation $\E\big(|X'(t)|\ \big| X(t)\big)$ is well defined  giving  \eqref{a:jma:2}.
\end{proof}

\begin{remark}
Analogous to the situation in Theorem \ref{t:jma:1}, the 
finiteness of the second moment of the level set can be obtained 
using Theorem \ref{t:codim}.
\end{remark}

\begin{example}[The case $D=d=1$]
Suppose that $X(t)$ satisfies the conditions of Theorem \ref{theorem:key} with and $k,m,h$ such that 
\begin{equation*}
p=2<\left(k-\frac{h}2-{1\over1+h}\right)\left(\frac1m+\frac1{1+h}\right)^{-1}.
\end{equation*}
In most  of the cases, it is very easy to see that the process also satisfies all the conditions of Theorem \ref{t:jma:1},
although this fact is not an exact consequence.
This is the case for all the examples considered in Section \ref{section:examples}.
Of course, for Gaussian processes and $\chi^2$ processes, the validity of Rice formula has been known for a long time.\bigskip 
\end{example}

\end{document}